\definecolor{darkblue}{rgb}{0,0,0.6}
\numberwithin{equation}{section}
\newtheorem{thm}[equation]{Theorem}
\newtheorem{prop}[equation]{Proposition}
\newtheorem{lem}[equation]{Lemma}
\newtheorem{cor}[equation]{Corollary}
\theoremstyle{definition}
\newtheorem{rem}[equation]{Remark}
\newtheorem{const}[equation]{Construction}
\newtheorem{defn}[equation]{Definition}
\newtheorem{ex}[equation]{Example}
\crefname{lem}{Lemma}{Lemmas}
\crefname{prop}{Proposition}{Propositions}
\newcounter{commentcounter}
\renewcommand{\epsilon}{\varepsilon}
\renewcommand{\theta}{\vartheta}
\renewcommand{\phi}{\varphi}
\renewcommand{\hat}{\widehat}
\newcommand{\bbN}{\mathbb{N}}
\newcommand{\bbZ}{\mathbb{Z}}
\newcommand{\bfK}{\mathbf{K}}
\newcommand{\bfN}{\mathbf{N}}
\newcommand{\bfV}{\mathbf{V}}
\newcommand{\bfW}{\mathbf{W}}
\newcommand{\cA}{\mathcal{A}}
\newcommand{\cB}{\mathcal{B}}
\newcommand{\cC}{\mathcal{C}}
\newcommand{\cD}{\mathcal{D}}
\newcommand{\cE}{\mathcal{E}}
\newcommand{\cF}{\mathcal{F}}
\newcommand{\cM}{\mathcal{M}}
\newcommand{\cU}{\mathcal{U}}
\DeclareMathOperator*{\colim}{colim}
\DeclareMathOperator{\cofib}{cofib}
\DeclareMathOperator{\fib}{fib}
\DeclareMathOperator{\id}{id}
\DeclareMathOperator{\Fun}{Fun}
\DeclareMathOperator{\sd}{sd}
\DeclareMathOperator{\grad}{gr}
\DeclareMathOperator{\Map}{Map}
\DeclareMathOperator{\Idem}{Idem}
\DeclareMathOperator{\Ind}{Ind}
\DeclareMathOperator{\grayson}{\Gamma}
\newcommand{\catex}{\mathrm{Cat}^{\mathrm{ex}}_\infty}
\newcommand{\catperf}{\mathrm{Cat}^{\mathrm{perf}}_\infty}
\newcommand{\madd}{\cM_{\mathrm{add}}}
\newcommand{\mloc}{\cM_{\mathrm{loc}}}
\newcommand{\uadd}{\cU_{\mathrm{add}}}
\newcommand{\uloc}{\cU_{\mathrm{loc}}}
\newcommand{\Sp}{\mathrm{Sp}}
\newcommand{\Ab}{\mathrm{Ab}}
\newcommand{\gapindex}[1]{#1^{[1]}}
\newcommand{\disc}{\delta}
\newcommand{\Kcon}{K}
\newcommand{\topbot}{\mathrlap{\top}\bot}
\title[Algebraic $K$--theory of stable $\mathbf{\infty}$--categories via binary complexes]{Algebraic \boldmath $K$--theory of stable $\mathbf{\infty}$--categories \unboldmath via binary complexes}
\author{Daniel Kasprowski}
\address{Rheinische Friedrich-Wilhelms-Universit\"at Bonn, Mathematisches Institut,\newline\indent Endenicher Allee 60, 53115 Bonn, Germany}
\email{kasprowski@uni-bonn.de}
\urladdr{http://www.math.uni-bonn.de/people/daniel/} 
\author{Christoph Winges}
\address{Rheinische Friedrich-Wilhelms-Universit\"at Bonn, Mathematisches Institut,\newline\indent Endenicher Allee 60, 53115 Bonn, Germany}
\email{winges@math.uni-bonn.de}
\urladdr{http://www.math.uni-bonn.de/people/winges/} 
\keywords{binary acyclic complexes, higher algebraic $K$--theory, stable $\infty$--categories}
\subjclass[2010]{Primary 19D99; Secondary 18F25}
\begin{document}

\begin{abstract}
We adapt Grayson's model of higher algebraic $K$--theory using binary acyclic complexes to the setting of stable $\infty$--categories.
As an application, we prove that the $K$--theory of stable $\infty$--categories preserves infinite products.
\end{abstract}

\maketitle

\section{Introduction}\label{sec:intro}
Using binary acyclic complexes, Grayson \cite{Grayson2012} gave a description of higher algebraic $K$--theory for exact categories in terms of generators and relations.
The present article shows that Grayson's picture of higher algebraic $K$--theory admits a concise description in the context of stable $\infty$--categories, using the language introduced by Blumberg--Gepner--Tabuada in \cite{BGT13}; see \cref{sec:alg-k} for a quick recollection of the notions of \emph{additive} and \emph{localizing invariants} as well as the associated categories of motives.
Let $\uadd$ and $\uloc$ denote the universal additive and localizing invariants from \cite{BGT13}.

In \cref{sec:model}, we define analogs $F^q\cC$ and $B^q\cC$ of acyclic complexes and Grayson's binary acyclic complexes in the setting of stable $\infty$--categories.
Following Grayson's argument, we obtain the following analog of \cite[Corollary~6.5]{Grayson2012}.

\begin{thm}
\label{thm:mainintro}
 The cofiber $\grayson\cC$ of the `diagonal' $\uloc(\Delta) \colon \uloc(F^q\cC) \to \uloc(B^q\cC)$ is naturally equivalent to $\Omega\uloc(\cC)$.
\end{thm}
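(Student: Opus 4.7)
The strategy is to adapt Grayson's delooping argument from \cite[Section~6]{Grayson2012} to the stable $\infty$-categorical framework, using crucially that $\uloc$ sends Verdier sequences to cofibre sequences in the stable $\infty$-category of motives.

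I would first identify $B^q\cC$ with the pullback $F^q\cC \times_{G^q\cC} F^q\cC$ in $\catex$, where $G^q\cC$ is the stable $\infty$-category of ``underlying graded objects'' of an acyclic complex (essentially a finite power of $\cC$), and the map $F^q\cC \to G^q\cC$ forgets the differentials. Under this identification, the diagonal $\Delta$ is tautologically the canonical map into the pullback, since a binary acyclic complex is by definition a pair of acyclic structures on the same underlying graded object.

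The next task is to verify that $\uloc$ carries this pullback to a pullback square of motives. The cleanest route is to exhibit $\Delta$ as the inclusion half of a split Verdier sequence, with either projection $B^q\cC \to F^q\cC$ serving as a retraction. Applying $\uloc$ and invoking the standard fact that in a stable $\infty$-category the cofibre of the diagonal into a fibre product $A \times_C A$ identifies with the fibre of $A \to C$, the problem reduces to computing the fibre of $\uloc(F^q\cC) \to \uloc(G^q\cC)$ in motives.

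The final step computes this fibre as $\Omega\uloc(\cC)$. By iterating additivity for $\uloc$ along the natural Verdier sequences relating $F^{q+1}\cC$ to $F^q\cC$ and $\cC$, one expects $\uloc(F^q\cC) \simeq \uloc(\cC)^{\oplus (q-1)}$ and $\uloc(G^q\cC) \simeq \uloc(\cC)^{\oplus q}$, and the forgetful map fits into a cofibre sequence
\[
    \uloc(F^q\cC) \;\to\; \uloc(G^q\cC) \;\to\; \uloc(\cC)
\]
whose third term represents the ``Euler characteristic'' quotient; taking loops gives the desired identification $\grayson\cC \simeq \Omega\uloc(\cC)$. The main obstacle lies in setting up these Verdier sequences in $\catex$ sufficiently naturally to guarantee functoriality of the equivalence, and in carefully controlling the comparison map so that exactly one copy of $\uloc(\cC)$ survives in the quotient. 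This is the technical heart of Grayson's delooping, and is precisely where the acyclicity condition is essential: without it the forgetful map would already be an equivalence, and no extra loop could appear.
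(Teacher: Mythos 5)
Your proposed reduction hinges on a step that fails: you want to conclude that $\uloc$ carries the defining pullback square
\[
\begin{tikzcd}
B^q\cC \ar[r]\ar[d] & F^q\cC\ar[d,"\grad"] \\
F^q\cC \ar[r,"\grad"] & \bigoplus_{\bbN}\cC
\end{tikzcd}
\]
to a pullback square of motives, so that $\cofib(\uloc(\Delta))\simeq\fib(\uloc(\grad))$. Your justification is to ``exhibit $\Delta$ as the inclusion half of a split Verdier sequence,'' but $\Delta$ is \emph{not} fully faithful (a morphism of binary complexes is two morphisms of filtered objects together with a compatibility of graded equivalences, whereas a morphism in $F^q\cC$ is only one such datum), so it cannot be the fully faithful leg of a Verdier sequence; the paper flags precisely this obstacle after \cref{defn:k-omega-c}. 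More to the point, $\grad \colon F^q\cC\to\bigoplus_\bbN\cC$ is neither fully faithful nor essentially surjective (its essential image is cut out by an Euler-characteristic-zero condition), so none of the standard criteria for a localizing invariant to preserve a pullback of stable $\infty$-categories apply. Indeed, if $\uloc$ did preserve the analogous pullback for complexes of bounded length $r$, the same formal computation you sketch would yield $\grayson_r\cC\simeq\Omega\uloc(\cC)$ for \emph{every} fixed $r$, which is not what one expects and which the paper certainly does not claim (\cref{sec:shortening} establishes only a $K_0$-level section for $r=7$); this is a strong indication that the pullback is not preserved at the motive level.

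The actual argument takes a longer route precisely to avoid this issue. Starting from $\grayson\cC\simeq\fib(\uloc(\top))$ (\cref{lem:grayson-as-fiber}, which only uses that $\top$ splits $\Delta$), one passes to the unbounded version $B\cC$ where $\uloc(\top)\colon\uloc(B\cC)\to\uloc(F\cC)$ \emph{is} an equivalence (\cref{lem:fc-bc}, proved by induction on filtration length), obtaining $\grayson\cC\simeq\Omega\uloc(B^t\cC/B^q\cC)$ (\cref{cor:k-omega-btc/bqc}). The technical heart is then \cref{prop:top-contractible-euler-zero}: identifying $B^t\cC/B^q\cC$ with the full subcategory $\cC^\chi$ of objects with vanishing class in $K_0$. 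This requires the $\mu$-construction (\cref{const:mu-x}), built out of Heller's criterion (\cref{lem:hellers-criterion}), to show a cofinality statement for slice categories, together with a computation of mapping spaces in Verdier quotients. None of this is captured in your sketch, and the final step uses that $\uloc(\cC^\chi)\simeq\uloc(\cC)$ because every object of $\cC$ is a retract of one in $\cC^\chi$. You correctly sense that an Euler-characteristic condition is the crux — but you would need to locate it in the quotient $B^t\cC/B^q\cC$ (not in the graded-object category), and to supply the cofinality argument that makes this identification work.
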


Each binary acyclic complex has a length.
One may ask to which extent the abelian group $\pi_0\bfK(\grayson_r\cC)$ defined in terms of binary acyclic complexes of a fixed length $r$ differs from the abelian group $\pi_0\bfK(\grayson\cC)$.
Following the construction from \cite{KasprowskiWinges}, we show in \cref{sec:shortening} that binary acyclic complexes of length~$5$ generate the whole group $\pi_0\bfK(\grayson\cC)$, and we construct a split to binary acyclic complexes of length~$7$. More explicitly, we show the following.

\begin{thm}
	\label{thm:split2}
	Let $\cC$ be an idempotent complete stable $\infty$--category.
	The canonical map $\pi_0\bfK(\grayson_5\cC)\to \pi_0\bfK(\grayson\cC)$ is a surjection and the canonical map $\pi_0\bfK(\grayson_7\cC) \to \pi_0\bfK(\grayson\cC) \cong K_1(\cC)$ admits a natural section.
\end{thm}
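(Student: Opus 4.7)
The plan is to adapt the strategy of \cite{KasprowskiWinges} from exact categories to stable $\infty$--categories, leveraging \cref{thm:mainintro} to identify $\pi_0\bfK(\grayson\cC)$ with $K_1(\cC)$. First I would unwind the definitions to obtain a presentation of $\pi_0\bfK(\grayson_r\cC)$ in terms of length-$r$ binary acyclic complexes as generators and relations coming from (a length-$r$ version of) the cofiber sequence $\bfK(F^q\cC) \to \bfK(B^q\cC) \to \bfK(\grayson\cC)$ together with additivity in $B^q\cC$. This attaches an explicit class $[M,d,\tilde d] \in \pi_0\bfK(\grayson_r\cC)$ to each length-$r$ binary acyclic complex.

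For the surjectivity of $\pi_0\bfK(\grayson_5\cC) \to \pi_0\bfK(\grayson\cC)$, I would handle the cases $r<5$ and $r>5$ separately. Complexes of length $r<5$ are padded with zero pieces at the ends, which does not change their class. For $r>5$, I would split a binary acyclic complex at an internal stage using the cofiber-sequence structure to decompose its class, via additivity in $B^q\cC$, as a sum of classes of two strictly shorter binary acyclic complexes; iterating reduces any length to $5$.

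For the section $\pi_0\bfK(\grayson\cC) \cong K_1(\cC) \to \pi_0\bfK(\grayson_7\cC)$, the idea is to represent an element of $K_1(\cC)$ by an automorphism $\alpha\colon X \to X$ (after stabilisation) and to assign to $\alpha$ an explicit length-$7$ binary acyclic complex built from $\alpha$, $\id_X$, and some auxiliary cofiber-sequence data. The extra two units of length, compared to the surjective range of $5$, are precisely what create room for the witnesses of the additivity relations required for the assignment to descend to a group homomorphism out of $K_1(\cC)$. Having produced such a homomorphism $\sigma$, I would then verify that its composition with the canonical projection to $\pi_0\bfK(\grayson\cC)$ is the identity by direct inspection of the chosen length-$7$ model; naturality in $\cC$ is automatic from the functoriality of the construction.

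The main obstacle will be the section: one must check that the proposed assignment on automorphisms is compatible with all relations presenting $K_1(\cC)$, and that the required higher coherences needed to form an honest object of $B^q\cC$ in the $\infty$-categorical sense can be filled in. This is exactly where the combinatorial bookkeeping of \cite{KasprowskiWinges} should carry over, since the argument for exact categories is already arranged so that each relation is witnessed by a specific diagram; the work in the present setting is to recast each such witnessing diagram as a coherent object of $B^q\cC$, which should proceed routinely once the foundational framework of \cref{sec:model} is in place.
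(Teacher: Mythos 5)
Your surjectivity strategy is in the right spirit, but the phrase ``split a binary acyclic complex at an internal stage using the cofiber-sequence structure'' glosses over the main obstacle: a naive truncation of an acyclic complex is not acyclic, and neither side of a binary acyclic complex can be cut off without introducing auxiliary data witnessing the partial Euler characteristics. That auxiliary data comes from Heller's criterion (\cref{lem:hellers-criterion} applied to $[X^\top_{0,k}] = [X^\bot_{0,k}]$, via \cref{lem:binary-complex-k0}): one chooses $A_k, B_k, S_k$ with cofiber sequences $A_k \to X^\top_k \oplus S_k \to B_k$ and $A_k \to X^\bot_k \oplus S_k \to B_k$, and the length-$5$ pieces $Y_k$ of the decomposition $[X] = \sum_{k\geq 2} [Y_k]$ are built around this data. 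The reason the bound is $5$ rather than $2$ or $3$ is precisely that these Heller witnesses take up room in the complex; your sketch, taken at face value, would predict a smaller bound and is therefore missing the key mechanism.

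For the section your route is genuinely different from the paper's, and has a real circularity risk. You want to represent $K_1(\cC)$ by automorphisms and send $\alpha$ to a length-$7$ binary acyclic complex, but the paper never invokes an automorphism presentation of $K_1$ for stable $\infty$--categories; establishing one is itself comparable in difficulty to the target statement (it is, in effect, a length-$2$ refinement of the surjectivity you are proving, which is stronger than the length-$5$ statement the paper actually obtains). The paper instead defines the section directly on $K_0(\grayson\cC)$ by the formula $[X] \mapsto \sum_{k\geq 2}[Y_k]$, and the length-$7$ complexes appear not as ``room for the additivity relations'' but in the specific cofiber-sequence diagram (\cref{prop:split}, eq.~(4.1)) used to prove that the value is independent of the chosen Heller data $(A_k,B_k,S_k)$; compatibility with cofiber sequences in $B^q\cC$ is handled by a separate argument choosing compatible Heller data across a cofiber sequence $X \to X' \to X''$, and factorization through $K_0(\grayson\cC)$ follows by observing that $X^\top \simeq X^\bot$ lets one take $f_k^\top \simeq f_k^\bot$, making each $Y_k$ diagonal. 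If you want to salvage your automorphism approach you would first need to establish the relevant presentation of $K_1$, and you should say how; otherwise it is cleaner to define the section on the Grayson presentation as the paper does.
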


The $K$--theory functor preserves filtered colimits and finite products for essentially trivial reasons. By work of Carlsson \cite{Carlsson95}, it is known that the algebraic $K$--theory functor commutes with infinite products of exact categories and of Waldhausen categories with a cylinder functor. This result plays an essential role in proofs of the integral Novikov conjecture, e.g.\ for virtually polycyclic groups \cite{CP,rosenthal} and linear groups with a finite dimensional model for the classifying space for proper actions \cite{RTY,KasFDC,KasLin}. As an application of \cref{thm:split2}, we will imitate the proof of Carlsson's theorem given in \cite{KasprowskiWinges} and show the following result in \cref{sec:products}. 
\begin{thm}
	\label{thm:commute}
	For every family $\{\cC_i\}_{i\in I}$ of small stable $\infty$-categories, the natural map
	\[\bfK\left(\prod_{i\in I}\cC_i\right)\to \prod_{i\in I} \bfK(\cC_i)\]
	is an equivalence. The same result holds for connective algebraic $K$--theory $\Kcon$ instead of non-connective $K$--theory $\bfK$.
\end{thm}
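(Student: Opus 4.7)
My plan is to mimic the proof of Carlsson's theorem from \cite{KasprowskiWinges}, replacing the exact-category binary complex technology with the $\infty$-categorical version developed in \cref{thm:mainintro,thm:split2}. Since $\pi_n$ commutes with products of spectra, it suffices to prove the natural map $K_n(\prod_i \cC_i) \to \prod_i K_n(\cC_i)$ is an isomorphism for every $n \in \bbZ$, for both $\bfK$ and $\Kcon$.

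The central observation is that for any fixed $r$, the constructions $F^q_r$ and $B^q_r$ from \cref{sec:model} are stable $\infty$-categories of diagrams of a fixed finite shape, and therefore commute with products of stable $\infty$-categories on the nose. Consequently, questions about $K$-theory of $\grayson_r$-constructions applied to products reduce to questions about products of $K$-theory of the constituent $F^q_r$'s and $B^q_r$'s.

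For the base cases: $K_0$ preserves products essentially formally, since objects and cofiber sequences in $\prod_i \cC_i$ are families of the corresponding data in each $\cC_i$ and the idempotent completeness assumption allows relations to be arranged componentwise; the negative-degree case for $\bfK$ then follows from the $K_0$ case via the Karoubi suspension, which itself preserves products. For $K_1$, the abelian group $\pi_0 \Kcon(\grayson_7 \cC)$ is the cokernel of $K_0(F^q_7 \cC) \to K_0(B^q_7 \cC)$ and hence preserves products by the $K_0$ case; the natural section from \cref{thm:split2}, combined with naturality with respect to the projections, then yields a two-sided inverse to $K_1(\prod_i \cC_i) \to \prod_i K_1(\cC_i)$.

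For the inductive step, to prove the statement for $K_{n+1}$ assuming it holds for $K_n$ and $K_{n-1}$ for all small stable $\infty$-categories, I invoke \cref{thm:mainintro} to set up a long exact sequence
\[\cdots \to K_n(F^q \cC) \to K_n(B^q \cC) \to K_{n+1}(\cC) \to K_{n-1}(F^q \cC) \to K_{n-1}(B^q \cC) \to \cdots\]
natural in $\cC$, and apply the five lemma to the comparison between the long exact sequence for $\prod_i \cC_i$ and the product of the long exact sequences for each $\cC_i$. The trickiest step will be ensuring this inductive bookkeeping works cleanly in the motivic setting of \cref{thm:mainintro}, where $\grayson \cC$ is a motive rather than a stable $\infty$-category; the five-lemma application rests on naturality of the defining cofiber sequence $\uloc(F^q \cC) \to \uloc(B^q \cC) \to \grayson \cC$ in $\cC$, which is built into the construction.
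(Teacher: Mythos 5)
Your base cases ($K_0$ via Heller's criterion, negative degrees via the Karoubi suspension and compatibility of Verdier quotients with products, and $K_1$ via $\grayson_7$ plus the natural section) match the paper's \cref{lem:k_0-products,lem:products-exact-sequences,prop:commute-pi_n} in structure and substance. The problem is the inductive step. You set up a long exact sequence
\[\cdots \to K_n(F^q\cC) \to K_n(B^q\cC) \to K_{n+1}(\cC) \to K_{n-1}(F^q\cC) \to \cdots\]
and propose to apply the five lemma, feeding in the inductive hypothesis ``$K_m$ commutes with products for all small stable $\infty$--categories'' at the outer four terms. But the inductive hypothesis gives you isomorphisms $K_m\bigl(\prod_i F^q\cC_i\bigr) \xrightarrow{\sim} \prod_i K_m(F^q\cC_i)$, whereas the comparison of long exact sequences requires isomorphisms $K_m\bigl(F^q\prod_i\cC_i\bigr) \to \prod_i K_m(F^q\cC_i)$. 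These are not the same: $F^q$ does \emph{not} commute with infinite products, because an object of $F^q\bigl(\prod_i\cC_i\bigr)$ is a filtration vanishing above a \emph{uniform} bound $r$, while an object of $\prod_i F^q\cC_i$ is a family of filtrations with bounds that can grow without limit as $i$ varies. The same obstruction applies to $B^q$. So the outer vertical maps in your five-lemma diagram are not covered by the inductive hypothesis, and there is no reason to expect them to be isomorphisms; indeed, establishing that would be essentially as hard as the theorem itself. Your opening observation --- that $F^q_r$ and $B^q_r$ for fixed $r$ \emph{do} commute with products because they are diagram categories of finite shape --- is exactly the right idea, but you abandon it in the inductive step.

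The paper avoids the long exact sequence entirely. For $n \geq 1$ it uses the \emph{iterated bounded} construction $\grayson^n_7$ together with \cref{thm:split} (the $n$-fold generalization of the splitting): since $\grayson^n_7$ is built from $F^q_7$ and $B^q_7$, it commutes with products, so $K_0\bigl(\grayson^n_7\prod_i\cC_i\bigr) \to \prod_i K_0(\grayson^n_7\cC_i)$ is an isomorphism by the $K_0$ case. The natural section of $K_0(\grayson^n_7\cC) \to K_0(\grayson^n\cC) \cong K_n(\cC)$ then exhibits the comparison map for $K_n$ as a retract of this isomorphism. This entirely replaces the five-lemma induction; your $K_1$ argument already is this argument for $n=1$, and you should simply iterate it rather than switching to the long exact sequence. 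The extra work is in proving \cref{thm:split} (the iterated splitting), which the paper handles by showing that $\grayson$ and $\grayson_7$ commute as endofunctors on motives --- a point your proposal does not engage with, but which is necessary to reduce the $n$-fold splitting to the $n=1$ case.
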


The case of connective algebraic $K$--theory admits the following generalization to the universal additive invariant.

\begin{thm}\label{thm:uadd-products}
 The universal additive invariant $\uadd \colon \catex \to \madd$ commutes with arbitrary products.
 In particular, any additive invariant that becomes corepresentable on the category of non-commutative motives $\madd$ commutes with arbitrary products.
\end{thm}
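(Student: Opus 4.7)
The plan is to reduce the statement to the connective $K$-theory case of \cref{thm:commute} via a Yoneda-style argument. To show that the canonical comparison map $\uadd(\prod_i \cC_i) \to \prod_i \uadd(\cC_i)$ is an equivalence in $\madd$, by the Yoneda lemma it suffices to show that for every $X \in \madd$ the induced map on mapping spectra is an equivalence. Since mapping spectra $\Map_{\madd}(X, -)$ always preserve products in the target, this reduces to showing that the functor $F_X \colon \cC \mapsto \Map_{\madd}(X, \uadd(\cC))$ commutes with arbitrary products of stable $\infty$-categories.

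Next, I would observe that the collection of $X \in \madd$ for which $F_X$ preserves products is closed under colimits in $\madd$, because $F_{\colim_\alpha X_\alpha} \simeq \lim_\alpha F_{X_\alpha}$ and limits commute with limits. By the construction of $\madd$ in \cite{BGT13}, this category is generated under colimits by the essential image of $\uadd$, so it suffices to verify the claim for $X = \uadd(\cD)$ where $\cD$ is a small stable $\infty$-category. For such $X$, I would identify $\Map_{\madd}(\uadd(\cD), \uadd(\cC))$ with the connective $K$-theory spectrum $\Kcon(\Fun^{\mathrm{ex}}(\cD, \cC))$ of the associated exact functor category, which is the additive analog of \cite[Theorem~9.36]{BGT13}. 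Combined with the observation that $\Fun^{\mathrm{ex}}(\cD, \prod_i \cC_i) \simeq \prod_i \Fun^{\mathrm{ex}}(\cD, \cC_i)$ (a direct consequence of the universal property of products), the desired commutation then follows from \cref{thm:commute}.

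The ``In particular'' assertion is then immediate: an additive invariant $E$ that becomes corepresentable on $\madd$ has the form $E = \Map_{\madd}(X, \uadd(-))$ for some $X \in \madd$, and such a composite preserves products since both constituent functors do.

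I expect the main obstacle to be the identification step, since the precise additive analog of \cite[Theorem~9.36]{BGT13} (rather than its localizing variant) has to be set up with care in the connective setting. Should this identification not be available in the exact form required, a natural fallback is to repeat the proof strategy of \cref{thm:commute}, as given in \cite{KasprowskiWinges}, directly for the enriched invariant $F_{\uadd(\cD)}$ in place of ordinary connective $K$-theory.
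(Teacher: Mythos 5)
Your proposal follows essentially the same route as the paper: reduce via mapping spectra to generators $X = \uadd(\cD)$, identify $\Map_{\madd}(\uadd(\cD), \uadd(-))$ with connective $K$--theory of an exact-functor category, and invoke \cref{thm:commute}. The identification you worry about is exactly \cite[Theorem~7.13]{BGT13} and requires no new work, but it holds for $\cD$ \emph{compact and idempotent complete} and produces $\Kcon(\Fun^{\mathrm{ex}}(\cD, \Idem(\cC)))$ rather than $\Kcon(\Fun^{\mathrm{ex}}(\cD, \cC))$; accordingly the paper restricts to compact generators (via \cite[Remark~6.8]{BGT13}) and inserts one extra step, namely \cref{lem:idem} showing that idempotent completion commutes with products, before exchanging the functor category with the product.
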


\subsection*{Acknowledgements}
We thank Akhil Mathew for helpful discussions and the referee for useful comments on a previous version of the article. The authors are members of the Hausdorff Center for Mathematics at the University of Bonn. The second author furthermore acknowledges support by the Max Planck Society and by Wolfgang L\"uck's ERC Advanced Grant ``KL2MG-interactions" (no.~662400).

\section{Algebraic \texorpdfstring{$K$--theory}{K-theory} of \texorpdfstring{stable $\infty$--categories}{stable infinity-categories}}\label{sec:alg-k}

We understand algebraic $K$--theory as an invariant of stable $\infty$--categories in the sense of Lurie \cite{LurieHA}.
In this context, the work of Blumberg--Gepner--Tabuada \cite{BGT13} provides a conceptual approach to algebraic $K$--theory which, in addition to some elementary facts about the class group $K_0$, is sufficient for the purposes of this article.
The current section serves to recall the necessary statements and terminology.

\subsection{Algebraic \texorpdfstring{$K$--theory}{K-theory} via universal invariants}
We work in the setting of quasicategories, the term $\infty$--category being used synonymously throughout.
A quasicategory is \emph{stable} if it admits all finite limits and colimits, and pushout squares and pullback squares coincide \cite[Proposition~1.1.3.4]{LurieHA}.
A functor between stable $\infty$--categories is \emph{exact} if it preserves finite limits and colimits \cite[Proposition~1.1.4.1]{LurieHA}.
We denote by $\catex$ the $\infty$--category of small stable $\infty$--categories and exact functors.
Inside $\catex$ we have the full subcategory of idempotent complete stable $\infty$--categories $\catperf$.
The idempotent completion of any small stable $\infty$--category is stable \cite[Corollary~1.1.3.7]{LurieHA}, and the idempotent completion functor $\Idem \colon \catex \to \catperf$ provides a left adjoint to the inclusion functor.

We are mainly interested in \emph{localizing} and \emph{additive} invariants in the sense of Blumberg--Gepner--Tabuada \cite{BGT13}.
In order to give the definitions of these notions, we recall some additional terminology.

\begin{defn}
	\label{def:quotient}
	Let $\cA$ and $\cB$ be stable $\infty$--categories, and let $\cA \subseteq \cB$ be a full subcategory.
	We define the Verdier quotient $\cB/\cA$ as the localization of $\cB$ at the collection of arrows in $\cB$ whose cofiber is equivalent to an object in $\cA$.
\end{defn}

\begin{defn}[{cf.~\cite[Definition~5.12 and Proposition~5.13]{BGT13}}]
 \label{def:exact-sequence}
 An \emph{exact sequence} in $\catex$ is a sequence of exact functors
 \[ \cA \to \cB \to \cC \]
 satisfying the following properties:
 \begin{enumerate}
  \item The functor $\cA \to \cB$ is fully faithful.
  \item The composed functor $\cA \to \cC$ is trivial.
  \item The induced exact functor $\Idem(\cB/\cA) \to \Idem(\cC)$ is an equivalence.
 \end{enumerate}
\end{defn}

Our definition of an exact sequence of stable $\infty$--categories does not match exactly the definition and characterization in \cite{BGT13}.
However, \cite[Proposition~I.3.5]{NikolausScholze} shows that \cref{def:exact-sequence} is equivalent to the one in \cite{BGT13}.
We prefer the present formulation because it applies more directly in the arguments of \cref{sec:model}. 

\begin{defn}[{\cite[Definition~8.1]{BGT13}}]
Let $\cD$ be a presentable stable $\infty$--category.
A functor $F \colon \catex \to \cD$ is a \emph{localizing invariant} if it satisfies the following properties:
\begin{enumerate}
 \item $F$ commutes with filtered colimits.
 \item\label{item:cofib} $F$ sends exact sequences to cofiber sequences. 
\end{enumerate} 
\end{defn}
\begin{rem}
	Note that \cite[Definition~8.1]{BGT13} also requires that $F$ sends the canonical functor $\cC \to \Idem(\cC)$ to an equivalence for every $\cC \in \catex$. But this is a consequence of \ref{item:cofib} by considering $0\to \cC\to \Idem(\cC)$.
\end{rem}

\begin{thm}[{\cite[Theorem~8.7]{BGT13}}]
 There exists a presentable stable $\infty$--category $\mloc$ and a localizing invariant $\uloc \colon \catex \to \mloc$ such that
 \[ (\uloc)^* \colon \Fun^{\mathrm{Lex}}(\mloc,\cD) \to \Fun^{\mathrm{loc}}(\catex,\cD) \]
 is an equivalence for every presentable stable $\infty$--category $\cD$, where $\Fun^{\mathrm{Lex}}$ denotes the $\infty$--category of colimit-preserving functors,
 and $\Fun^{\mathrm{loc}}$ denotes the $\infty$--category of localizing invariants.
\end{thm}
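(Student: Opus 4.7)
The plan is to build $\mloc$ as a presentable localization of a stable presheaf $\infty$--category freely generated from $\catex$, and then to read off the universal property from adjoint functor theorems. Since $\catex$ is large, the first task is a size reduction: one picks a sufficiently large regular cardinal $\kappa$ so that $\catex \simeq \Ind_\kappa(\catex^\kappa)$ for the small subcategory $\catex^\kappa$ of $\kappa$--compact objects, and so that both conditions defining a localizing invariant are visible on objects and morphisms in a small set. This is possible because both exact sequences and $\kappa$--filtered colimits can be encoded by diagrams of bounded size.

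Next I would form the free presentable stable $\infty$--category on $\catex^\kappa$, namely $\cE := \Sp(\cP(\catex^\kappa))$, and let $y \colon \catex^\kappa \to \cE$ denote the composition of the Yoneda embedding with $\Sigma^\infty_+$. By construction, colimit-preserving functors out of $\cE$ into a presentable stable $\infty$--category $\cD$ correspond to arbitrary functors $\catex^\kappa \to \cD$. Now let $S$ be the small set of morphisms in $\cE$ consisting of
\begin{itemize}
\item the canonical map $\colim_{i} y(\cC_i) \to y(\colim_{i} \cC_i)$ for every $\kappa$--small filtered diagram $\{\cC_i\}$ in $\catex^\kappa$, and
\item the canonical map $\cofib(y(\cA)\to y(\cB)) \to y(\cC)$ for every exact sequence $\cA \to \cB \to \cC$ in $\catex^\kappa$.
\end{itemize}
Applying the presentable localization machinery \cite[Proposition~5.5.4.15]{LurieHA}, one obtains a presentable stable $\infty$--category $\mloc := S^{-1}\cE$ together with a reflective left adjoint $L \colon \cE \to \mloc$. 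Set $\uloc$ to be the composition $L\circ y$ on $\catex^\kappa$, extended to all of $\catex$ by left Kan extension along $\catex^\kappa \hookrightarrow \catex$ (which is filtered--colimit--preserving on the nose because $S$ already contains the filtered colimit comparison maps).

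To establish the universal property, note that colimit--preserving functors $\mloc \to \cD$ correspond to colimit--preserving functors $\cE \to \cD$ that invert $S$, and the latter correspond to functors $\catex^\kappa \to \cD$ sending $\kappa$--small filtered colimits to colimits and exact sequences to cofiber sequences. By the size choice of $\kappa$, such functors are in equivalence with localizing invariants $\catex \to \cD$, proving the claimed equivalence of functor categories.

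The main obstacle is the first step: justifying that a single cardinal $\kappa$ can be chosen so that the entire universal property reduces to data on $\catex^\kappa$. One needs that every exact sequence in $\catex$ is a filtered colimit of exact sequences in $\catex^\kappa$, and that every localizing invariant is determined by its restriction to $\kappa$--compact categories. This is genuinely the technical heart of \cite[Theorem~8.7]{BGT13} and relies on the accessibility of the class of exact sequences together with standard manipulations of $\kappa$--filtered colimits of stable $\infty$--categories; once this is in place, the remainder of the argument is formal presentability.
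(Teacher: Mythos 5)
The paper does not prove this statement at all: it is cited verbatim as \cite[Theorem~8.7]{BGT13}, so there is no internal proof to compare against. What you have written is a reconstruction of the Blumberg--Gepner--Tabuada argument, and it does capture the right global shape: form a free presentable stable cocompletion of a small piece of $\catex$, Bousfield-localize at a set of maps encoding the two axioms, and read off the universal property from the presentable localization adjunction.

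Two points are worth flagging. First, BGT13 do not need your hedge with an unspecified large regular cardinal $\kappa$: a substantial portion of their paper (Sections~3--4) is devoted to proving that $\catperf$ is \emph{compactly} generated, so they can take $\kappa = \omega$ and work with $(\catperf)^\omega$ directly (and the $\catex$ statement reduces to the $\catperf$ one since localizing invariants invert $\cC \to \Idem(\cC)$). Your version trades this theorem for the weaker and more formal observation that $\catex$ is $\kappa$-accessible for some $\kappa$, but that trade has a cost, which is the second point: your parenthetical claim that the left Kan extension of $L \circ y$ ``is filtered-colimit-preserving on the nose because $S$ already contains the filtered colimit comparison maps'' is not correct as stated. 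Membership of those maps in $S$ governs which functors \emph{out of} $\mloc$ are allowed, not whether $\uloc$ itself preserves colimits. The left Kan extension along $\catex^\kappa \hookrightarrow \catex$ preserves $\kappa$-filtered colimits for free; to upgrade this to preservation of \emph{all} filtered colimits you need to invoke a result of the type \cite[Proposition~5.5.7.11]{MR2522659} (a $\kappa$-continuous functor out of a $\kappa$-accessible category preserves all filtered colimits iff its restriction to $\kappa$-compact objects does), together with the fact that the restriction preserves $\kappa$-small filtered colimits. This is repairable, but the present phrasing conflates two different roles played by the filtered-colimit maps in $S$.

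Finally, you are right to identify the accessibility of the class of exact sequences (every Verdier sequence in $\catex$ being a filtered colimit of Verdier sequences among $\kappa$-compact categories) as the real content. BGT13 prove the analogous $\omega$-statement, and it is genuinely nontrivial; your sketch correctly isolates it but does not supply it. Also note that BGT13 actually build $\mloc$ as a further localization of $\madd$ rather than directly from presheaves, though this is a cosmetic difference. In summary: correct architecture, honestly acknowledged gap at the accessibility step, and one small but real misstatement about why $\uloc$ preserves filtered colimits.
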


For the moment, we contend ourselves with the characterization of non-connective algebraic $K$--theory $\bfK \colon \catex \to \Sp$
as the localizing invariant corepresented by $\uloc(\Sp^\omega)$, where $\Sp^\omega$ denotes the stable $\infty$--category of compact spectra \cite[Theorem~9.8]{BGT13}.
That is,
	\begin{equation}\label{eq:corep}
	\bfK(\cA)\simeq \Map(\uloc(\Sp^\omega),\uloc(\cA)).
	\end{equation}
The notion of an additive invariant is closely related to that of a localizing invariant.
The difference lies in the fact that we require fewer exact sequences to be turned into cofiber sequences.

\begin{defn}[{\cite[Definition~5.18]{BGT13}}]
 An exact sequence $\cA \xrightarrow{f} \cB \xrightarrow{g} \cC$ of stable $\infty$--categories is \emph{split-exact}
 if $f$ and $g$ admit right adjoints $i \colon \cB \to \cA$ and $j \colon \cC \to \cB$, respectively,
 such that the unit $\id \to if$ and counit $gj \to \id$ are equivalences.
\end{defn}
\begin{rem}
	Note that in the above definition the unit $\id\to if$ is automatically an equivalence since $f$ is fully faithful.
	One can also prove that the counit $gj\to \id$ is an equivalence by showing that $j$ is fully faithful.
\end{rem}

\begin{ex}\label{ex:classical-additivity}
 The following is the canonical non-trivial example of a split-exact sequence of stable $\infty$--categories.
 Let $\cC$ be a stable $\infty$--category, and consider the stable $\infty$--category $\cE(\cC)$ of cofiber sequences in $\cC$.
 Then $\cE(\cC)$ fits into a split-exact sequence
 \[ \cC \xrightarrow{k} \cE(\cC) \xrightarrow{q} \cC, \]
 in which the functor $q$ projects to the cofiber, while $k$ sends an object $X$ to the canonical cofiber sequence $X \xrightarrow{\id} X \to 0$.
\end{ex}

\begin{defn}[{\cite[Definition~6.1]{BGT13}}]
Let $\cD$ be a presentable stable $\infty$--category.
A functor $F \colon \catex \to \cD$ is an \emph{additive invariant} if it satisfies the following properties:
\begin{enumerate}
 \item $F$ commutes with filtered colimits.
 \item $F$ sends the canonical functor $\cC \to \Idem(\cC)$ to an equivalence for every $\cC \in \catex$.
 \item For every split-exact sequence
  \[\begin{tikzcd} \cA \ar[r, "f", shift left] & \cB\ar[l, "i", shift left]\ar[r, "g", shift left] & \cC\ar[l, "j", shift left] \end{tikzcd}\]
  in $\catex$, the morphism $F(f) + F(j) \colon F(\cA) \oplus F(\cC) \to F(\cB)$ is an equivalence.
\end{enumerate}
\end{defn}

\begin{thm}[{\cite[Theorem~6.10]{BGT13}}]
 There exists a presentable stable $\infty$--category $\madd$ and an additive invariant $\uadd \colon \catex \to \madd$ such that
 \[ (\uadd)^* \colon \Fun^{\mathrm{Lex}}(\madd,\cD) \to \Fun^{\mathrm{add}}(\catex,\cD) \]
 is an equivalence for every presentable stable $\infty$--category $\cD$, where $\Fun^{\mathrm{Lex}}$ denotes the $\infty$--category of colimit-preserving functors,
 and $\Fun^{\mathrm{add}}$ denotes the $\infty$--category of additive invariants.
\end{thm}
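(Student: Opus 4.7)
The plan is to realize $\madd$ as an accessible Bousfield localization of a stable presheaf category, following the standard recipe for constructing universal invariants. Fix a regular cardinal $\kappa$ large enough that every object of $\catex$ is a $\kappa$-filtered colimit of $\kappa$-compact ones, and choose a small full subcategory $\catex_\kappa \subseteq \catex$ consisting of such objects, closed under the operations (idempotent completion, and the split-exact sequences between $\kappa$-compact objects) appearing in the additivity axioms. Set $\cP \coloneqq \Fun(\catex_\kappa^{\mathrm{op}},\Sp)$, which is presentable stable, and write $y \colon \catex_\kappa \to \cP$ for the spectrum-valued Yoneda embedding.

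Next, introduce a small set $S$ of morphisms in $\cP$ encoding the three defining axioms: the comparison maps $\colim_i y(\cC_i) \to y(\colim_i \cC_i)$ for $\kappa$-filtered diagrams in $\catex_\kappa$ whose colimit lies in $\catex_\kappa$; the maps $y(\cC) \to y(\Idem(\cC))$; and the maps $y(\cA) \oplus y(\cC) \to y(\cB)$ for split-exact sequences $\cA \to \cB \to \cC$ in $\catex_\kappa$. Form the accessible Bousfield localization $\madd \coloneqq S^{-1}\cP$ with reflector $L \colon \cP \to \madd$; since $\cP$ is presentable stable and $S$ is small, $\madd$ is again presentable stable. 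Define $\uadd \colon \catex \to \madd$ as the essentially unique extension of $L \circ y$ that preserves $\kappa$-filtered colimits.

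Two verifications then remain. First, $\uadd$ is itself an additive invariant: filtered colimit preservation is built in, and the other two axioms hold because $L$ inverts $S$ combined with the observation that every split-exact sequence (resp.\ idempotent completion) in $\catex$ arises as a $\kappa$-filtered colimit of corresponding structures in $\catex_\kappa$. Second, the universal property is obtained by composing three equivalences: colimit-preserving functors $\madd \to \cD$ correspond via $L$ to colimit-preserving functors $\cP \to \cD$ inverting $S$; these correspond by the universal property of presheaves to functors $\catex_\kappa \to \cD$ satisfying the three additivity axioms; and these correspond to additive invariants $\catex \to \cD$ via left Kan extension along $\catex_\kappa \hookrightarrow \catex$. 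The main obstacle I anticipate is the set-theoretic bookkeeping around the choices of $\kappa$ and $\catex_\kappa$: they must be large enough to contain all the relevant diagrams but small enough that $S$ remains a set, and one must verify that the final Kan-extension step yields a genuine equivalence of invariant categories rather than only a comparison map.
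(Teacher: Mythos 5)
This statement is not proved in the paper at all: it is quoted from \cite[Theorem~6.10]{BGT13}, so your proposal has to be measured against the construction given there. Your outline is recognizably the same recipe (presheaves of spectra on a small subcategory, Bousfield localization at maps encoding the axioms, universal property via localization plus Yoneda plus Kan extension), but as written it has a genuine gap at the point you yourself flag, and the gap is not mere bookkeeping. An additive invariant is required to preserve \emph{all} filtered colimits, i.e.\ $\omega$-filtered ones. If you fix an uncountable regular $\kappa$ and define $\uadd$ (and the final Kan-extension step of the universal property) by $\kappa$-filtered extension from $\catex_\kappa$, you only ever produce, and only ever detect, preservation of $\kappa$-filtered colimits; restriction to $\catex_\kappa$ followed by left Kan extension is not an equivalence onto the category of functors preserving $\omega$-filtered colimits. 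Your attempted fix --- putting the comparison maps $\colim_i y(\cC_i) \to y(\colim_i \cC_i)$ into the localizing set $S$ --- does not address this: inverting maps in $\madd$ constrains what $\uadd$ does to certain morphisms, but the filtered-colimit axiom is a condition on how a functor out of $\catex$ interacts with colimit diagrams in $\catex$, which is not of the form ``send this set of arrows to equivalences''; moreover the collection of all $\kappa$-filtered diagrams in $\catex_\kappa$ with colimit in $\catex_\kappa$ is not a small set as stated, and such colimits typically fail to be $\kappa$-compact anyway.

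The construction in \cite{BGT13} avoids all of this by using that $\catperf$ is compactly generated: one takes $\operatorname{Pre}_{\Sp}((\catperf)^\omega)$ (exactly the category appearing in the remark quoted in Section~5 of this paper, \cite[Remark~6.8]{BGT13}), localizes only at the maps $y(\cA)\oplus y(\cC)\to y(\cB)$ coming from split-exact sequences of compact idempotent-complete categories, and encodes the filtered-colimit axiom not by localization but by the equivalence between functors on $(\catperf)^\omega$ and filtered-colimit-preserving functors on $\catperf \simeq \Ind((\catperf)^\omega)$; the idempotent-completion axiom is likewise handled by factoring $\uadd$ through $\Idem \colon \catex \to \catperf$ rather than by inverting $y(\cC)\to y(\Idem(\cC))$. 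If you rework your argument with $\kappa=\omega$ and $\catperf$ in place of an arbitrary $\catex_\kappa$, you recover their proof; but then the inputs ``$\catperf$ is compactly generated'' and ``every split-exact sequence is a filtered colimit of split-exact sequences of compact categories'' (which your verification that $\uadd$ is additive silently uses) are substantive statements from \cite{BGT13} that need proof, not assumptions one can make by choosing $\kappa$ large.
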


Assuming the non-connective algebraic $K$--theory functor $\bfK$ to be known,
connective algebraic $K$--theory $\Kcon \colon \catex \to \Sp$ can be characterized as the functor $\tau_{\geq 0} \circ \bfK$,
where $\tau_{\geq 0} \colon \Sp \to \Sp$ denotes the functor taking connective covers.
By \cite[Theorem~7.13]{BGT13}, connective algebraic $K$--theory, considered as a colimit-preserving functor on $\madd$, is corepresented by $\uadd(\Sp^\omega)$.
Note that with the above definition, the connective algebraic $K$--theory of $\cC$ only depends on $\Idem(\cC)$. It is possible to define connective algebraic $K$-theory such that the values at $\cC$ and $\Idem(\cC)$ may differ, but here we are following \cite{BGT13}.

\subsection{Lower algebraic \texorpdfstring{$K$--theory}{K-theory}}\label{sec:lower-k-theory}

Our arguments do require some additional information about the lower algebraic $K$--groups of a small stable $\infty$--category.
For $\cC$ a small stable $\infty$--category, we denote by $K_0(\cC)$ the abelian group generated by equivalence classes of objects in $\cC$,
subject to the relation $[C_0] + [C_2] = [C_1]$ for every cofiber sequence $C_0 \to C_1 \to C_2$ in $\cC$.
In general, we have an isomorphism
\[ K_0(\Idem(\cC)) \cong \pi_0\Kcon(\cC) \cong \pi_0\bfK(\cC). \]
The next lemma gives a convenient criterion for equality of elements in $K_0(\cC)$.

\begin{lem}[Heller's criterion]\label{lem:hellers-criterion}
 Let $X$ and $Y$ be objects in a stable $\infty$--category $\cC$.
 
 Then $[X] = [Y] \in K_0(\cC)$ if and only if there are cofiber sequences
 \[A \to X \oplus S \to B \quad \text{and} \quad A \to Y \oplus S \to B \]
 for some objects $A$, $B$, $S \in \cC$.
\end{lem}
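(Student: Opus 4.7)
The plan is to treat the two directions separately: the ``if'' direction is immediate from the defining relations of $K_0$, while for the converse I would show that the Heller relation itself is already an equivalence relation which yields a presentation of $K_0$.

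For the easy direction, given cofiber sequences $A \to X \oplus S \to B$ and $A \to Y \oplus S \to B$, I would combine them with the split cofiber sequences $X \to X \oplus S \to S$ and $Y \to Y \oplus S \to S$ to conclude $[X] + [S] = [A] + [B] = [Y] + [S]$ in $K_0(\cC)$, whence $[X] = [Y]$.

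For the converse, I would introduce the relation $X \sim_H Y$ on equivalence classes of objects defined by the condition in the statement, and show that it is an equivalence relation compatible with $\oplus$. Reflexivity uses the cofiber sequence $0 \to X \to X$ with $A=S=0$, $B=X$, and symmetry is built into the definition. Transitivity is the central point: given witnesses $A_1 \to X \oplus S_1 \to B_1$, $A_1 \to Y \oplus S_1 \to B_1$ and $A_2 \to Y \oplus S_2 \to B_2$, $A_2 \to Z \oplus S_2 \to B_2$, I would form the direct sum of the first with the third and of the second with the fourth. The resulting pair of cofiber sequences have the common outer terms $A_1 \oplus A_2$ and $B_1 \oplus B_2$, and after reshuffling the middle terms as $X \oplus S$ and $Z \oplus S$ with $S := Y \oplus S_1 \oplus S_2$, they witness $X \sim_H Z$. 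Compatibility with $\oplus$ follows the same way by direct-summing with identity sequences.

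The remaining task is to identify the abelian monoid $M$ of $\sim_H$-equivalence classes with $K_0(\cC)$. First I would show $M$ is a group by exhibiting $[\Sigma X]$ as the inverse of $[X]$: the split cofiber sequence $X \to X \oplus \Sigma X \to \Sigma X$ and the cofiber sequence $X \to 0 \to \Sigma X$ together witness $X \oplus \Sigma X \sim_H 0$. Next, for any cofiber sequence $X_0 \to X_1 \to X_2$, comparison with the split sequence $X_0 \to X_0 \oplus X_2 \to X_2$ (taking $A = X_0$, $S = 0$, $B = X_2$) shows $X_1 \sim_H X_0 \oplus X_2$, so the defining relations of $K_0$ hold in $M$. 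This produces a surjective homomorphism $K_0(\cC) \to M$, which is a section of the homomorphism $M \to K_0(\cC)$ provided by the easy direction; since both maps are the identity on classes of objects, they are mutually inverse. I expect transitivity to require the most care, but as outlined the key observation is purely formal: a suitable direct sum lets one absorb the intermediate object $Y$ into the stabilizing summand $S$.
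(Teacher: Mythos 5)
Your proof is correct and is essentially the argument the paper appeals to: the paper's proof simply cites \cite[Lemma~3.2]{KasprowskiWinges}, and the standard Heller-style argument carried out there is exactly what you reproduce (define the stable-equivalence relation, check it is a congruence, show it forces the group and cofiber-sequence relations, and conclude by exhibiting mutually inverse homomorphisms). The only thing worth flagging is that you should confirm that direct sums of cofiber sequences are again cofiber sequences in a stable $\infty$--category, but this is immediate since cofibers are computed objectwise in $\Fun(\Delta^1,\cC)$, so no gap remains.
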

\begin{proof}
 The proof of \cite[Lemma~3.2]{KasprowskiWinges} applies almost verbatim.
\end{proof}

Fix an uncountable regular cardinal $\kappa$.
The \emph{suspension} of a stable $\infty$--category $\cC$ is defined to be
\[ \Sigma\cC := \Ind_{\aleph_0}(\cC)^\kappa/\cC, \]
where $\Ind_{\aleph_0}(\cC)^\kappa$ denotes the full subcategory of $\kappa$-compact objects in the $\Ind$-completion of $\cC$, see also \cite[Section~2.4 and Section~9.1]{BGT13}.
The stable $\infty$--category $\Ind_{\aleph_0}(\cC)^\kappa$ admits a canonical Eilenberg swindle since it admits countable coproducts, hence has contractible $K$--theory.
Non-connective $K$--theory can be constructed from connective $K$--theory by the formula
\[ \bfK(\cC) \simeq \colim_{n \in \bbN} \Omega^n\Kcon(\Sigma^n\cC), \]
see \cite[Definition~9.6]{BGT13}.
It follows that $\bfK(\cC) \simeq \Omega^n\bfK(\Sigma^n\cC)$ for every $n \in \bbN$,
and hence the negative $K$--groups of any stable $\infty$--category can be described by the formula
\[ \pi_{-n}\bfK(\cC) \cong K_0(\Idem(\Sigma^n\cC)). \]

\section{Grayson's model for \texorpdfstring{stable $\infty$--categories}{stable infinity-categories}}
\label{sec:model}

Let $\cC$ be a stable $\infty$--category.

\begin{defn}[{cf.~\cite[Definition~1.2.2.2]{LurieHA}}]\label{defn:filtered-object}
 For a linearly ordered set $I$, denote by $\gapindex{I}$ the subposet of $I \times I$ containing all pairs $(i,j)$ satisfying $i \leq j$.
 An \emph{$I$--complex in $\cC$} is a functor $X \colon N(\gapindex{I}) \to \cC$ such that $X_{i,i}$ is a zero object for all $i$,
 and such that for $i \leq j \leq k$ the square
 \[\begin{tikzcd}
    X_{i,j}\ar[r]\ar[d] & X_{i,k}\ar[d] \\
    X_{j,j}\ar[r] & X_{j,k}
   \end{tikzcd} \]
 is a pushout.
 Denote by $S_I\cC$ the full subcategory of $\Fun(\gapindex{I},\cC)$ spanned by the $I$--complexes.
 For $I = [n]$, we also write $S_n\cC$ instead of $S_{[n]}\cC$.
  
 An $\bbN$--complex $X$ is called \emph{bounded} if there exists a natural number $r \in \bbN$ such that $X_{0,i} \to X_{0,i+1}$ is an equivalence for $i \geq r$.
 In this case, we say that $X$ \emph{is supported on $[0,r]$}.
 Let $F\cC$ denote the full subcategory of $S_\bbN\cC$ spanned by the bounded $\bbN$--complexes.
 We call $F\cC$ the stable $\infty$--category of \emph{bounded complexes}.
 
 Let $F^q\cC \subseteq F\cC$ be the full stable subcategory spanned by those bounded complexes $X$ such that $X_{0,i} \simeq 0$ for $i \gg 0$.
\end{defn}

\begin{defn}
 Let $\bbN^\disc$ denote the set of natural numbers considered as a discrete poset.
 The map $\bbN^\disc \to \gapindex{\bbN},\ i \mapsto (i,i+1)$ induces the functor
 \[ \grad \colon F\cC \to \bigoplus_{\bbN} \cC \]
 which associates to a bounded complex its \emph{(underlying) graded object}.
\end{defn}

\begin{defn}\label{defn:binary-complexes}
 The category of \emph{binary complexes} $B\cC$ is defined by the following pullback square of $\infty$--categories:
 \[\begin{tikzcd}
    B\cC\ar{r}{\top}\ar{d}[swap]{\bot} & F\cC\ar{d}{\grad} \\
    F\cC\ar{r}{\grad} & \bigoplus\limits_{\bbN} \cC
  \end{tikzcd}\]
 The functor $\top$ is called the \emph{top projection}, while $\bot$ is the \emph{bottom projection}.
 By substituting $F^q\cC$ at appropriate places, we obtain the following variations of $B\cC$:
 \[\begin{tikzcd}
    B^t\cC\ar{r}{\top}\ar{d}[swap]{\bot} & F^q\cC\ar{d}{\grad} \\
    F\cC\ar{r}{\grad} & \bigoplus\limits_{\bbN} \cC
  \end{tikzcd}\qquad
  \begin{tikzcd}
    B^b\cC\ar{r}{\top}\ar{d}[swap]{\bot} & F\cC\ar{d}{\grad} \\
    F^q\cC\ar{r}{\grad} & \bigoplus\limits_{\bbN} \cC
  \end{tikzcd}\qquad
  \begin{tikzcd}
    B^q\cC\ar{r}{\top}\ar{d}[swap]{\bot} & F^q\cC\ar{d}{\grad} \\
    F^q\cC\ar{r}{\grad} & \bigoplus\limits_{\bbN} \cC
  \end{tikzcd}\]
 The objects of $B^t\cC$, $B^b\cC$ and $B^q\cC$ are called \emph{binary $\top$--acyclic complexes}, \emph{binary $\bot$--acyclic complexes} and \emph{binary acyclic complexes}, respectively.
\end{defn}
 
\begin{rem}\label{rem:bc-explicit-description}
 Occasionally, we will require an explicit description of maps $K \to B\cC$, where $K$ is some simplicial set.
 Since limits of stable $\infty$--categories may be computed in the category of all $\infty$--categories \cite[Theorem~1.1.4.4]{LurieHA},
 it follows from \cite[Example~17.7.3 and Remark~17.7.4]{MR3221774} that maps $p \colon K \to B\cC$ may be described
 by a pair $(p^\top, p^\bot)$ of maps $K \to F\cC$ together with a natural equivalence $\grad \circ p^\top \xrightarrow{\sim} \grad \circ p^\bot$ in $\bigoplus_\bbN \cC$.
 To specify a map $K \to B^t\cC$, $K \to B^b\cC$ or $K \to B^q\cC$, we additionally have to require that $p^\top$, $p^\bot$ or both of them map to the full subcategory $F^q\cC$.
\end{rem}

Note that $B^q\cC$, $B^t\cC$, $B^b\cC$ and $B\cC$ are all stable.
Moreover, these categories fit into a commutative square
\[\begin{tikzcd}
  B^q\cC\ar{r}\ar{d} & B^t\cC\ar{d} \\
  B^b\cC\ar{r} & B\cC
\end{tikzcd}\]
in which all functors are fully faithful and exact.
The identity functor on $F\cC$ induces the \emph{diagonal functor} $\Delta \colon F\cC \to B\cC$.
Similarly, the identity functor on $F^q\cC$ induces a functor $\Delta \colon F^q\cC \to B^q\cC$,
and these fit into a commutative square of exact functors
\[\begin{tikzcd}
  F^q\cC\ar{r}{\Delta}\ar{d} & B^q\cC\ar{d} \\
  F\cC\ar{r}{\Delta} & B\cC
\end{tikzcd}\]
By definition, the diagonal functor $\Delta$ is split by both $\top$ and $\bot$.

\begin{defn}\label{defn:k-omega-c}
 Let $\cC$ be a small stable $\infty$--category.
 Define the \emph{Grayson construction} $\grayson\cC$ on $\cC$ to be the motive
 \[ \grayson\cC := \cofib( \uloc(F^q\cC) \xrightarrow{\uloc(\Delta)} \uloc(B^q\cC) ). \]
\end{defn}

Our goal is to show that $\grayson\cC$ represents $\Omega\uloc(\cC)$.
The proof follows closely Grayson's arguments in \cite{Grayson2012}.
Note that the functor $\Delta\colon F^q\cC\to B^q\cC$ is not fully faithful and hence we cannot directly form the Verdier quotient in $\catex$.

Objects in the $\infty$--category $F\cC$ are filtered objects in $\cC$ with a choice of all possible filtration quotients.
To make some of the upcoming proofs easier to read, we also define versions of these categories which do not include choices of filtration quotients,
which will make it easier to map into these categories.

Let $f\cC \subseteq \Fun(N(\bbN), \cC)$ be the full subcategory of \emph{bounded filtrations},
i.e.~the full subcategory spanned by those functors $X$ which are essentially constant in all but finitely many degrees and satisfy $X_0 \simeq 0$.
By \cite[Lemma~1.2.2.4]{LurieHA}, the forgetful functor $u \colon F\cC \to f\cC$ induced by the map $\bbN \to \gapindex{\bbN},\ i \mapsto (0,i)$ is an equivalence.

Denote by $f^q\cC \subseteq f\cC$ the full stable subcategory spanned by those $X$ satisfying $\colim X \simeq 0$.

By choosing an inverse of the equivalence $u \colon F\cC \xrightarrow{\sim} f\cC$, we also obtain a functor $\grad \colon f\cC \to \bigoplus_\bbN \cC$.
This allows us to define stable $\infty$--categories $b\cC$, $b^t\cC$, $b^b\cC$ and $b^q\cC$ in analogy to \cref{defn:binary-complexes}.

The next lemma, which is reminiscent of the Gillet--Waldhausen theorem, but much easier to prove,
tells us that we may concentrate on describing the $K$--theory of $F\cC/F^q\cC$.
Let $\iota \colon \cC \to f\cC$ denote the functor induced by the projection $N(\bbN) \to \Delta^0$,
and let $\pi \colon f\cC \to \cC$ denote the colimit functor (which exists since filtrations in $f\cC$ become essentially constant).
The structure maps of the colimit provide a natural transformation $\tau \colon \id \to \iota \circ \pi$.

\begin{lem}\label{lem:gillet-waldhausen}
 The functors $\overline{\iota} \colon \cC \to f\cC/f^q\cC$ and $\overline{\pi} \colon f\cC/f^q\cC \to \cC$ induced by $\iota$ and $\pi$ are equivalences.
\end{lem}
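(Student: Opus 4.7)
My plan is to show that $\overline{\iota}$ and $\overline{\pi}$ are mutually inverse equivalences. I first check that $\overline{\pi}$ is well-defined: $\pi$ is a colimit functor, so it is exact (preserves cofibers), and by definition it annihilates the full subcategory $f^q\cC$; hence it carries every morphism whose cofiber lies in $f^q\cC$ to an equivalence and therefore factors through the Verdier quotient, as in \cref{def:quotient}. The functor $\overline{\iota}$ is simply the composition of $\iota$ with the localization, so there is nothing to check there.

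Next I verify that $\overline{\pi}\circ\overline{\iota}\simeq\id_\cC$. By construction $\iota(X)$ is the filtration $0\to X\xrightarrow{\id}X\xrightarrow{\id}\cdots$, which is eventually constant at $X$; its colimit is therefore $X$, giving $\pi\circ\iota\simeq\id_\cC$ and, after localization, the desired identification.

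For the composition in the other direction I would use the unit transformation $\tau\colon\id_{f\cC}\to\iota\circ\pi$. I would verify that cofibers in $f\cC$ are computed pointwise in $\Fun(N(\bbN),\cC)$ (the pointwise cofiber of a map between bounded filtrations vanishing at $0$ is again bounded and vanishes at $0$), so that
\[ \cofib(\tau_Y)_i \;\simeq\; \begin{cases} 0 & i=0,\\ \cofib\bigl(Y_i\to\colim Y\bigr) & i\geq 1. \end{cases} \]
Then I would compute
\[ \pi\bigl(\cofib(\tau_Y)\bigr) \;\simeq\; \cofib\bigl(\colim_i Y_i \to \colim Y\bigr) \;\simeq\; 0, \]
using that $\pi$ is a colimit and commutes with cofibers, which shows $\cofib(\tau_Y)\in f^q\cC$ for every $Y$. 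By the description of the Verdier quotient, $\tau$ becomes a natural equivalence in $f\cC/f^q\cC$, and so $\overline{\iota}\circ\overline{\pi}\simeq\id$.

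The step I expect to require the most care is the pointwise computation of $\cofib(\tau_Y)$ together with the verification that its colimit vanishes; this is where the boundedness condition defining $f\cC$ is used, and it is the only place where one needs to know that the ambient functor category behaves well with respect to the full subcategory $f\cC$. Everything else reduces to formal manipulations with the universal property of the Verdier quotient.
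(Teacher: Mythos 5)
Your argument is correct and is essentially the paper's own: both establish $\overline{\pi}\circ\overline{\iota}\simeq\id$ directly and then show $\overline{\iota}\circ\overline{\pi}\simeq\id$ by verifying that the cofiber of the unit $\tau\colon\id\to\iota\circ\pi$ lands in $f^q\cC$, hence becomes an equivalence in the Verdier quotient. You merely spell out the pointwise cofiber computation that the paper leaves implicit.
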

\begin{proof}
 There is an evident equivalence $\pi \circ \iota \simeq \id_\cC$.
 Since $\pi$ vanishes on $f^q\cC$, there is an induced exact functor $\overline{\pi} \colon f\cC/f^q\cC \to \cC$
 as well as a natural transformation $\overline{\tau} \colon \id \to \overline{\iota} \circ \overline{\pi}$.
 We still have $\overline{\pi} \circ \overline{\iota} \simeq \id$.
 Moreover, $\overline{\tau}$ is a natural equivalence by \cref{def:quotient} of the Verdier quotient since $\cofib(\tau_X)$ lies in $f^q\cC$ for every $X \in f\cC$.
 The claim follows.
\end{proof}
 
In what follows, we will omit the overline decoration on $\iota$ and $\pi$; it should always be clear from context whether we consider these as functors to/from the quotient category or the original category.
 
\begin{lem}\label{lem:grayson-as-fiber}
 There is a natural equivalence
 \[ \grayson\cC \simeq \fib( \uloc(B^q\cC) \xrightarrow{\uloc(\top)} \uloc(F^q\cC) ). \]
\end{lem}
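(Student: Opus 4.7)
The plan is to reduce the statement to a purely formal fact about stable $\infty$--categories. The key input is the identity $\top \circ \Delta \simeq \id_{F^q\cC}$, which the paper has already noted as an immediate consequence of the pullback definition of $B^q\cC$ and the construction of $\Delta$. Applying the localizing invariant $\uloc$ turns this identity into a splitting in the stable presentable $\infty$--category $\mloc$: we obtain morphisms with $\uloc(\top) \circ \uloc(\Delta) \simeq \id_{\uloc(F^q\cC)}$.

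I would then invoke the following general fact: in any stable $\infty$--category, a morphism $f \colon X \to Y$ admitting a retraction $r \colon Y \to X$ gives rise to a natural equivalence $\cofib(f) \simeq \fib(r)$. Applied to $f = \uloc(\Delta)$ and $r = \uloc(\top)$, this immediately identifies $\grayson\cC = \cofib(\uloc(\Delta))$ with $\fib(\uloc(\top))$.

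To establish the general fact, I would form the map of cofiber sequences
\[
\begin{tikzcd}
X \ar[r, "f"] \ar[d, equal] & Y \ar[r] \ar[d, "r"] & \cofib(f) \ar[d] \\
X \ar[r, equal] & X \ar[r] & 0
\end{tikzcd}
\]
whose left-hand square commutes thanks to the retraction identity. Taking vertical fibers, stability guarantees that the resulting sequence $0 \to \fib(r) \to \cofib(f)$ is again a cofiber sequence, from which the desired equivalence $\fib(r) \simeq \cofib(f)$ drops out. Naturality in $\cC$ is automatic because all functors involved---$\uloc$, $\Delta$, $\top$, $\cofib$, and $\fib$---are themselves natural.

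There is no real obstacle; the argument is entirely formal once one recognizes the splitting and uses stability of $\mloc$. One could alternatively phrase the proof via the direct-sum decomposition $\uloc(B^q\cC) \simeq \uloc(F^q\cC) \oplus \grayson\cC$ induced by the retraction and observe that $\fib(\uloc(\top))$ is the complementary summand, but the cofiber-sequence argument above seems cleanest.
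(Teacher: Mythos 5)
Your proof is correct and takes essentially the same route as the paper: both exploit the retraction $\top\circ\Delta\simeq\id$ and then apply a formal nine-lemma/fiber-cofiber compatibility argument in the stable $\infty$--category $\mloc$ to identify $\cofib(\uloc(\Delta))$ with $\fib(\uloc(\top))$. You merely abstract the key step into a standalone general lemma (a split morphism in a stable $\infty$--category has $\cofib(f)\simeq\fib(r)$), whereas the paper writes the corresponding $3\times 3$ diagram inline.
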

\begin{proof}
 Since $\top \circ \Delta \simeq \id$, we have the following commutative diagram, natural in $\cC$:
 \[\begin{tikzcd}
    0\ar[r]\ar[d] & \uloc(F^q\cC)\ar[r, "\id"]\ar[d, "\uloc(\Delta)"] & \uloc(F^q\cC)\ar[d, "\id"] \\
    \fib( \uloc(B^q\cC) \xrightarrow{\uloc(\top)} \uloc(F^q\cC) )\ar[r]\ar[d, "\id"] & \uloc(B^q\cC)\ar[r, "\uloc(\top)"]\ar[d] & \uloc(F^q\cC)\ar[d] \\
    \fib( \uloc(B^q\cC) \xrightarrow{\uloc(\top)} \uloc(F^q\cC) )\ar[r, "\simeq"] & \grayson\cC\ar[r] & 0
   \end{tikzcd}\]
\end{proof}

\begin{defn}\label{defn:filtrations-fixed-length}
 Let $f_r\cC \subseteq f\cC$ denote the full subcategory of bounded filtrations supported on $[0,r]$,
 and define $b_r\cC$ as the pullback
 \[\begin{tikzcd}
    b_r\cC\ar{r}{\top}\ar{d}[swap]{\bot} & f_r\cC\ar{d}{\grad} \\
    f_r\cC\ar{r}{\grad} & \bigoplus\limits_{\bbN} \cC
  \end{tikzcd}\]
\end{defn}

\begin{defn}\label{def:shifted-constant-filtration}
 Let $p_r\colon N(\bbN)\to \Delta^1$ denote the map characterized by sending $i$ to $0$ for $i\leq r$ and $i$ to $1$ for $i\geq r+1$.
 Furthermore, choose a zero object $0$ in $\cC$ and a section $s$ of the trivial fibration $\cC_{0/}\to \cC$.
 Define the functor $\iota_{r+1}\colon \cC\to f_{r+1}\cC$ as the composition
 \[\cC\xrightarrow{s}\cC_{0/}\subseteq \Fun(\Delta^1,\cC)\xrightarrow{p_r^*}f_{r+1}\cC.\]
\end{defn}

\begin{lem}\label{lem:fc-bc}
The map $\uloc(\top) \colon \uloc(B\cC) \to \uloc(F\cC)$ induced by the top projection is an equivalence.
\end{lem}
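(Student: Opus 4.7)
The plan is to compare $\uloc(\top)$ with the graded object functors on both sides. By the defining pullback square in \cref{defn:binary-complexes}, the compositions $\grad \circ \top$ and $\grad \circ \bot$ agree, yielding a canonical functor $B\cC \to \bigoplus_\bbN \cC$ which I also denote $\grad$. It then suffices to show that both
\[ \uloc(\grad) \colon \uloc(F\cC) \to \uloc(\bigoplus_\bbN \cC) \quad\text{and}\quad \uloc(\grad) \colon \uloc(B\cC) \to \uloc(\bigoplus_\bbN \cC) \]
are equivalences; the lemma then follows by two-out-of-three since $\grad = \grad \circ \top$ on $B\cC$.

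Both of these equivalences reduce to the same inductive length argument. Let $F_r\cC \subseteq F\cC$ and $B_r\cC \subseteq B\cC$ denote the full subcategories of objects supported on $[0, r]$. Then $F\cC = \colim_r F_r\cC$ and $B\cC = \colim_r B_r\cC$ as filtered colimits in $\catex$, and since $\uloc$ preserves filtered colimits, it suffices to show that the maps $\uloc(F_r\cC) \to \uloc(\cC)^r$ and $\uloc(B_r\cC) \to \uloc(\cC)^r$ induced by the tuple of graded pieces are equivalences for each $r \geq 0$.

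For each $r \geq 1$, I would construct a split-exact sequence
\[ F_{r-1}\cC \hookrightarrow F_r\cC \xrightarrow{\grad_{r-1}} \cC, \]
in which the inclusion admits truncation at index $r-1$ as its right adjoint, and $\grad_{r-1}$ admits as right adjoint the functor sending $A \in \cC$ to the trivially filtered object $0 \to \dots \to 0 \to A$ with $A$ at the top position. Iterating this decomposition under $\uloc$ then yields the desired equivalence for $F_r\cC$. The analogous sequence $B_{r-1}\cC \hookrightarrow B_r\cC \xrightarrow{\grad_{r-1}} \cC$ is obtained by applying the same adjoints to top and bottom filtrations simultaneously; this respects the common-graded pullback condition defining $B_r\cC$ (cf.\ \cref{rem:bc-explicit-description}) because all of the operations involved commute with $\grad$ up to canonical equivalence.

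The main obstacle is verifying split-exactness on the binary side: one has to check that the candidate adjoints truly take values in the appropriate subcategory, preserve the coherence data coming from the pullback description of $B_r\cC$, and that the relevant unit and counit conditions hold. These checks are straightforward but require care, since mapping spaces in $B_r\cC$ are pullbacks rather than products of those in $F_r\cC$.
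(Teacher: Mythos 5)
Your proposal is correct and follows essentially the same route as the paper: reduce by filtered colimits to the fixed-length subcategories, then induct on the length by identifying each successive subquotient with $\cC$ via the top graded piece. The paper performs the induction step via Verdier quotients, showing $\top \colon b_{r+1}\cC/b_r\cC \to f_{r+1}\cC/f_r\cC$ is an equivalence and invoking a five-lemma argument on the split cofiber sequences, while you repackage the same content as split-exact sequences $F_{r-1}\cC \hookrightarrow F_r\cC \to \cC$ (and the binary analogue) plus a two-out-of-three argument against $\uloc(\grad)$; the adjoints you describe correspond precisely to the section $\Delta\circ\iota_{r+1}$ and the natural transformations the paper constructs, so this is the same argument in lightly different packaging.
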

\begin{proof}
 Since we have a commutative square
 \[\begin{tikzcd}
    B\cC\ar[r, "\top"]\ar[d, "\simeq"] & F\cC\ar[d, "\simeq"] \\
    b\cC\ar[r, "\top"] & f\cC
   \end{tikzcd}\]
 it suffices to prove the claim for the lower horizontal arrow.
 
 Since $f\cC \simeq \colim_r f_r\cC$ and finite limits commute with directed colimits
 (combine \cite[Theorem~1.1.4.4 and Proposition~1.1.4.6]{LurieHA} with \cite[Lemma~5.4.5.6]{MR2522659}),
 we also have $b\cC \simeq \colim_r b_r\cC$.
 As $\uloc$ commutes with filtered colimits, it is enough to prove that $\uloc(\top) \colon \uloc(b_r\cC) \to \uloc(f_r\cC)$ is an equivalence for all $r$.
 We do this by induction on $r$, the case $r=0$ being trivial.
 
 Let $\grad_{r+1} \colon f_{r+1}\cC \to \cC$ be the composition of $\grad \colon f_{r+1}\cC \to \bigoplus \cC$ with the projection onto the $(r+1)$--th component.
 Then $\grad_{r+1} \circ \iota_{r+1} \simeq \id_\cC$, and there exists a natural transformation $\id \to \iota_{r+1} \circ \grad_{r+1}$.
 Since $\grad_{r+1}$ vanishes on $f_r\cC$, there is an induced functor $\overline{\grad}_{r+1} \colon f_{r+1}\cC/f_r\cC \to \cC$.
 On the quotient, the natural transformation $\id \to \iota_{r+1} \circ \grad_{r+1}$ becomes an equivalence since its fiber is contained in $f_r\cC$.
 Hence, $\overline{\grad_{r+1}} \colon f_{r+1}\cC/f_r\cC \to \cC$ is an equivalence.
 
 Consider the functor $\grad_{r+1} \circ \top \colon b_{r+1}\cC \to \cC$ next.
 Then $\Delta \circ \iota_{r+1}$ defines a functor in the opposite direction which satisfies $\grad_{r+1} \circ \top \circ \Delta \circ \iota_{r+1} \simeq \id$.
 There also exists a natural transformation $\id \to \Delta \circ \iota_{r+1} \circ \grad_{r+1} \circ \top$ since $\grad_{r+1} \circ \top \simeq \grad_{r+1} \circ \bot$,
 which induces a natural equivalence on $b_{r+1}\cC/b_r\cC$ by similar reasoning to the above.
  
 We thus have a commutative diagram
 \[\begin{tikzcd}
   b_{r+1}\cC/b_r\cC\ar[rr, "\top"]\ar[dr, "\overline{\grad}_{r+1} \circ \top"'] & & f_{r+1}\cC/f_r\cC\ar[dl, "\overline{\grad}_{r+1}"] \\
   & \cC &
 \end{tikzcd}\]
 Since $\overline{\grad}_{r+1} \circ \top$ and $\overline{\grad}_{r+1}$ are equivalences, $\top$ is also an equivalence.
  
 In particular, we obtain a map of split cofiber sequences
 \[\begin{tikzcd}
    \uloc(b_r\cC)\ar{r}\ar{d}{\uloc(\top)} & \uloc(b_{r+1}\cC)\ar{r}\ar{d}{\uloc(\top)} & \uloc(\cC)\ar{d}{\id} \\
    \uloc(f_r\cC)\ar{r} & \uloc(f_{r+1}\cC)\ar{r} & \uloc(\cC)
 \end{tikzcd}\]
 By induction, we conclude that $\uloc(b_r\cC) \to \uloc(f_r\cC)$ is an equivalence for all $r$.
\end{proof}

\begin{cor}\label{cor:k-omega-c-fib-bc/bqc-fc/fqc}
 There exists a natural equivalence
 \[ \grayson\cC\simeq \Omega\fib( \uloc(B\cC/B^q\cC) \xrightarrow{\uloc(\top)} \uloc(F\cC/F^q\cC) ). \]
\end{cor}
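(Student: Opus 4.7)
The plan is to apply $\uloc$ to a map of exact sequences comparing binary and ordinary bounded complexes, and read off the conclusion from \cref{lem:grayson-as-fiber} and \cref{lem:fc-bc}.

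First, I would observe that the fully faithful inclusions $F^q\cC \hookrightarrow F\cC$ and $B^q\cC \hookrightarrow B\cC$ both fit tautologically into exact sequences in the sense of \cref{def:exact-sequence}: the Verdier quotient supplies the cofiber term, the first two axioms are immediate from \cref{def:quotient}, and the third axiom is trivial because the cofiber functor is the quotient itself. Moreover the top projection is natural in the variable complex, so I would assemble the commutative diagram
\[\begin{tikzcd}
 B^q\cC \ar[r] \ar[d, "\top"'] & B\cC \ar[r] \ar[d, "\top"] & B\cC/B^q\cC \ar[d, "\top"] \\
 F^q\cC \ar[r] & F\cC \ar[r] & F\cC/F^q\cC
\end{tikzcd}\]
in $\catex$, where the rows are exact sequences.

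Second, applying the localizing invariant $\uloc$ I obtain a map of cofiber sequences in $\mloc$. Since $\mloc$ is stable these are equivalently fiber sequences, so taking vertical fibers yields another cofiber sequence
\[ \fib\bigl(\uloc(\top)\colon \uloc(B^q\cC)\to\uloc(F^q\cC)\bigr) \to \fib\bigl(\uloc(\top)\colon \uloc(B\cC)\to\uloc(F\cC)\bigr) \to \fib\bigl(\uloc(\top)\colon \uloc(B\cC/B^q\cC)\to\uloc(F\cC/F^q\cC)\bigr). \]
By \cref{lem:fc-bc}, the middle term is contractible, and in any stable $\infty$--category a cofiber sequence with vanishing middle term exhibits the left term as the loop object of the right term. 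Combining this equivalence with \cref{lem:grayson-as-fiber}, which identifies the left-hand fiber with $\grayson\cC$, gives the claimed equivalence.

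I do not expect a substantial obstacle: the only point that requires care is confirming that the rows of the displayed square genuinely are exact sequences in the sense of \cref{def:exact-sequence}, but this is forced by the definition of the Verdier quotient together with the fact that $F^q\cC \subseteq F\cC$ and $B^q\cC \subseteq B\cC$ are already stable subcategories. The rest is a straightforward diagram chase in the stable $\infty$--category $\mloc$.
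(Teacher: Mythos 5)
Your proof is correct and follows the same route as the paper: apply $\uloc$ to the evident map of exact sequences $(B^q\cC \to B\cC \to B\cC/B^q\cC) \to (F^q\cC \to F\cC \to F\cC/F^q\cC)$, use \cref{lem:fc-bc} to kill the middle vertical fiber, deduce $\fib(\uloc(\top)|_{B^q}) \simeq \Omega\fib(\uloc(\top)|_{B/B^q})$, and finish with \cref{lem:grayson-as-fiber}. The only cosmetic difference is that the paper states the loop-shift identity directly rather than phrasing it via a cofiber sequence of vertical fibers, but the content is identical.
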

\begin{proof}
 Consider the map of cofiber sequences
 \[\begin{tikzcd}
  \uloc(B^q\cC)\ar{r}\ar{d}{\uloc(\top)} & \uloc(B\cC)\ar{r}\ar{d}{\uloc(\top)} & \uloc(B\cC/B^q\cC)\ar{d}{\uloc(\top)} \\
  \uloc(F^q\cC)\ar{r} & \uloc(F\cC)\ar{r} & \uloc(F\cC/F^q\cC)
 \end{tikzcd}\]
 Since $\uloc(B\cC) \xrightarrow{\uloc(\top)} \uloc(F\cC)$ is an equivalence by \cref{lem:fc-bc}, we obtain a natural equivalence
 \[ \Omega\fib( \uloc(B\cC/B^q\cC) \xrightarrow{\uloc(\top)} \uloc(F\cC/F^q\cC) ) \simeq \fib( \uloc(B^q\cC) \xrightarrow{\uloc(\top)} \uloc(F^q\cC) ). \]
 The claim follows by combining this with \cref{lem:grayson-as-fiber}.
\end{proof}

\begin{defn}
 Define the shift $(-)[1] \colon b\cC \to b\cC$ as the functor induced by the functor $f\cC \to f\cC$ arising from the map of posets $\bbN \to \bbN, i \mapsto \max\{0,i-1\}$.
\end{defn}

The next lemma is straightforward.

\begin{lem}\label{lem:shifting}
 The cofiber of the canonical natural transformation $(-)[1] \to \id_{b\cC}$ takes values in the essential image of the diagonal functor $\Delta \colon f^q\cC \to b^q\cC$.
\end{lem}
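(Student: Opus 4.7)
The plan is to compute the cofiber pointwise in the pullback defining $b\cC$, recognize the resulting filtered objects as depending only on the graded data, and then use the binary graded identification to exhibit the cofiber as lying in the essential image of $\Delta$. Since $b\cC$ is a pullback of stable $\infty$-categories, colimits are computed componentwise, so for $X = (X^\top, X^\bot, \phi) \in b\cC$ one has $\cofib(X[1]\to X) = (C^\top, C^\bot, \tilde\phi)$ with $C^{\top/\bot} := \cofib(X^{\top/\bot}[1]\to X^{\top/\bot})$ in $f\cC$ and $\tilde\phi$ the induced equivalence of graded objects. A preliminary observation will show both $C^{\top/\bot}$ lie in $f^q\cC$: the colimit functor $\colim \colon f\cC \to \cC$ preserves cofibers, and the shift map $\colim X^{\top/\bot}[1] \to \colim X^{\top/\bot}$ is an equivalence because both filtrations become essentially constant at the same value, so $\colim C^{\top/\bot} \simeq 0$ and hence $\cofib(X[1]\to X) \in b^q\cC$.

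Unwinding the shift, for $j\geq 1$ the cofiber sequence $X^\top_{j-1}\to X^\top_j\to\grad(X^\top)_{j-1}$ identifies $C^\top_j \simeq \grad(X^\top)_{j-1}$ (with $C^\top_0 = 0$), and analogously for $C^\bot$. Passing to the $\bbN$-complex picture of \cref{defn:filtered-object}, each transition $C^\top_j\to C^\top_{j+1}$ is naturally identified with the composition $X^\top_{j-1,j}\to X^\top_{j-1,j+1}\to X^\top_{j,j+1}$ arising from the cofiber sequence $X^\top_{j-1,j}\to X^\top_{j-1,j+1}\to X^\top_{j,j+1}$, and is canonically nullhomotopic. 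The key technical claim I would prove is that this data assembles, naturally in the filtration, into a functor depending only on the underlying graded object; equivalently, the functor $\cofib((-)[1]\to\id_{f\cC})\colon f\cC \to f^q\cC$ admits a factorization $H\circ\grad$ through some functor $H\colon\bigoplus_\bbN\cC \to f^q\cC$.

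Granting this factorization, I would apply it to both projections of $X\in b\cC$ to obtain equivalences $C^\top \simeq H(\grad X^\top)$ and $C^\bot \simeq H(\grad X^\bot)$; the equivalence $\phi\colon\grad X^\top \xrightarrow{\sim} \grad X^\bot$ then produces $H(\phi)\colon H(\grad X^\top) \xrightarrow{\sim} H(\grad X^\bot)$ compatibly with $\tilde\phi$, yielding an equivalence $\cofib(X[1]\to X) \simeq \Delta(H(\grad X^\top))$ in $b^q\cC$ and placing the cofiber in the essential image of $\Delta$.

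The main obstacle will be establishing the factorization of $\cofib((-)[1]\to\id)$ through $\grad$ at the full $\infty$-categorical level, rather than merely on homotopy categories: the canonical nullhomotopies for the transition maps of $C$ need to be coherently matched with those prescribed by $H$. I expect this to be handled by constructing $H$ explicitly out of the constant-filtration functors $\iota_{r+1}$ from \cref{def:shifted-constant-filtration} (for instance, as a direct sum over graded degrees of suitable shifts), and by verifying the natural equivalence in the $\bbN$-complex model $F\cC$, where the explicit filtration quotients $X_{i,j}$ make the relevant cofiber sequences manifest.
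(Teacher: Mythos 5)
The paper gives no proof for this lemma (it is labeled ``straightforward''), so I can only compare against the expected argument; your strategy of computing the cofiber componentwise and then showing $\cofib((-)[1]\to\id_{f\cC})$ factors through $\grad$ is, I believe, the intended one, and the key ideas are present. Two points deserve attention, however.

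First, the proposal to build $H$ ``out of the constant-filtration functors $\iota_{r+1}$'' should be adjusted: $\iota_{r+1}(X)$ has colimit $X$, so it does not lie in $f^q\cC$; what you actually need are the ``concentrated in a single degree'' filtrations. The cleanest realization of the factorization uses the $F\cC$ model: the cofiber is restriction along the poset map $\alpha\colon\bbN\to\gapindex{\bbN}$, $i\mapsto(i-1,i)$ (with $\alpha(0)=(0,0)$), and $\alpha$ factors through the ``subdivision'' of $\bbN$ with the subdivision points landing on the diagonal $(i,i)$. Since $X_{i,i}\simeq 0$ by definition of $F\cC$, the resulting diagram lies in the full subcategory of functors sending subdivision points to zero objects, and this subcategory is equivalent to $\bigoplus_\bbN\cC$ by restriction to integers. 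This is precisely the coherence-handling you anticipated being the main obstacle, and it works; it replaces your proposed ``cofiber sequence'' nullhomotopy of $C_j\to C_{j+1}$ by the cleaner factorization through $X_{j,j}$, which makes the independence of the filtration manifest at the diagram level.

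Second, the final step asserts that $H(\phi)$ is ``compatibly with $\tilde\phi$'', but this compatibility is not a formal consequence of the factorization $\cofib((-)[1]\to\id_{f\cC})\simeq H\circ\grad$ on $f\cC$. The equivalence $\phi$ is a morphism in $\bigoplus_\bbN\cC$, not in $f\cC$, so naturality of your equivalence $\eta$ does not directly apply; meanwhile $\tilde\phi$ is produced by the cofiber in $b\cC$, which a priori uses the full filtered data. The cleanest repair is to run the subdivision argument at the level of $b\cC$ itself: the cofiber functor $b\cC\to b\cC$ is restriction along $\alpha$ applied to both components, and since $\alpha$ factors through the diagonal zero objects, the whole binary cofiber factors through the shared graded projection $b\cC\to\bigoplus_\bbN\cC$ followed by $\Delta\circ H$. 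This yields the lemma with the identification of $\top$ and $\bot$ built in, rather than trying to assemble it afterwards from the two $f\cC$-level equivalences and the path $\phi$. As written, that assembly step is a genuine gap, though a fillable one.
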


Recall that $u \colon F\cC \to f\cC$ denotes the forgetful functor.

\begin{prop}\label{prop:bc/btc-c}
 The functor $\pi \circ u \circ \top \colon B\cC/B^t\cC \to \cC$ is an equivalence.
\end{prop}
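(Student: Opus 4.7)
My approach is to reduce, via \cref{lem:gillet-waldhausen}, to showing that the top projection descends to an equivalence
\[\bar{\top}\colon B\cC/B^t\cC \to F\cC/F^q\cC\]
of stable $\infty$--categories. Since $\top(B^t\cC)\subseteq F^q\cC$ by definition and $\Delta(F^q\cC)\subseteq B^q\cC\subseteq B^t\cC$, the diagonal descends to a section $\bar{\Delta}$ of $\bar{\top}$, which gives $\bar{\top}\circ\bar{\Delta}\simeq\id$ automatically and yields the essential surjectivity of $\bar{\top}$.

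The heart of the argument is verifying $\bar{\Delta}\circ\bar{\top}\simeq\id_{B\cC/B^t\cC}$. My preferred strategy is inductive, mimicking the proof of \cref{lem:fc-bc}. Let $b_r\cC\subseteq b\cC$ be as in \cref{defn:filtrations-fixed-length} and introduce the obvious acyclic variants $f^q_r\cC:=f^q\cC\cap f_r\cC$ and $b^t_r\cC:=b^t\cC\cap b_r\cC$. Since Verdier quotients commute with filtered colimits in $\catex$ and $b\cC=\colim_r b_r\cC$, $b^t\cC=\colim_r b^t_r\cC$, it suffices, via the equivalence $u\colon F\cC\xrightarrow{\simeq} f\cC$, to prove that $\bar{\top}\colon b_r\cC/b^t_r\cC\to f_r\cC/f^q_r\cC$ is an equivalence for every $r\geq 1$. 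The base case $r=1$ is immediate: $b_1\cC\simeq f_1\cC\simeq\cC$ while $b^t_1\cC=f^q_1\cC=0$, because any acyclic length-$1$ filtration $0\to X\to X$ is forced to have $X\simeq 0$.

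For the inductive step, the last graded piece functor $\grad_{r+1}\circ\top$ identifies $b_{r+1}\cC/b_r\cC$ and $f_{r+1}\cC/f_r\cC$ with $\cC$ in a compatible fashion, by the same argument used in the proof of \cref{lem:fc-bc}. Combined with the inductive hypothesis, a diagram chase on the two parallel cofiber sequences of Verdier quotients arising from the inclusions $b^t_r\cC\hookrightarrow b^t_{r+1}\cC\hookrightarrow b_{r+1}\cC$ and $f^q_r\cC\hookrightarrow f^q_{r+1}\cC\hookrightarrow f_{r+1}\cC$ then reduces the step to exhibiting the analogous identification $\bar{\top}\colon b^t_{r+1}\cC/b^t_r\cC\to f^q_{r+1}\cC/f^q_r\cC$ as an equivalence.

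The main obstacle is precisely this last identification for the acyclic subcategories. It is subtler than the non-acyclic case because the vanishing of the colimit couples the last graded piece with the $r$-th stage of the filtration via a suspension. I would push through by exhibiting an analogue of the section $\iota_{r+1}$ from \cref{def:shifted-constant-filtration} adapted to the acyclic setting (replacing the filtration $0\to X\to\cdots\to X$ by the acyclic cone $0\to X\to\cdots\to X\to 0$) and then directly computing that the resulting functor is an inverse equivalence on both Verdier quotients compatibly with $\top$; this is where I expect the bulk of the technical verification to lie.
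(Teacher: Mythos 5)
Your proposal takes a genuinely different route from the paper's, but it has two gaps, one of which is fatal to the plan as stated. The reduction via \cref{lem:gillet-waldhausen} to showing $\bar\top\colon B\cC/B^t\cC\to F\cC/F^q\cC$ is an equivalence is fine. The first gap is the one you flag yourself: you never prove that $b^t_{r+1}\cC/b^t_r\cC\to f^q_{r+1}\cC/f^q_r\cC$ is an equivalence, and this is not a routine repeat of the non-acyclic case because for $X\in f^q_{r+1}\cC$ one has $X_{r+1}\simeq 0$, so the top graded piece is $\Sigma X_r$; the interplay between the last filtration stage and the vanishing colimit has to be handled, and you defer exactly this.

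The second and more serious gap is the 2-out-of-3 you invoke. To establish the intermediate equivalence $b_{r+1}\cC/b^t_r\cC\to f_{r+1}\cC/f^q_r\cC$ from the inductive hypothesis (on the kernel $b_r\cC/b^t_r\cC\to f_r\cC/f^q_r\cC$) and \cref{lem:fc-bc} (on the quotient $b_{r+1}\cC/b_r\cC\to f_{r+1}\cC/f_r\cC$), you need the implication ``kernel and cokernel of a map of Verdier sequences are equivalences $\Rightarrow$ the middle map is an equivalence.'' That implication is \emph{false} at the level of stable $\infty$--categories. A counterexample is given by \cref{ex:classical-additivity} itself: the exact functor $\cC\oplus\cC\to\cE(\cC)$, $(X,Y)\mapsto(X\to X\oplus Y\to Y)$, restricts to an equivalence on the kernels (both $\simeq\cC$) and induces an equivalence on the Verdier quotients (both $\simeq\cC$ via the second projection, resp.\ the cofiber functor), yet is not essentially surjective since not every cofiber sequence splits. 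So the inductive argument cannot be run this way without additional input. The paper avoids all of this by working directly: it observes that $\Delta\circ\iota$ is a right inverse to $\pi\circ\top$ on $b\cC/b^t\cC$ and exhibits a zig-zag $\id\leftarrow(-)[1]\to\Delta\circ\iota\circ\pi\circ\top$ of natural transformations whose cofibers land in $b^t\cC$ (using \cref{lem:shifting}), so both become equivalences in the quotient. I would switch to that argument rather than trying to patch the induction.
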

\begin{proof}
 It suffices to prove that $\pi \circ \top \colon b\cC/b^t\cC \to \cC$ is an equivalence.
 The functor $\Delta \circ \iota \colon \cC \to b\cC/b^t\cC$ provides a right-inverse to $\pi \circ \top$.
 
 Note that $\grad\Delta\iota\pi(X^\top)$ is zero in all but the lowest degree.
 Therefore, the transformation $\tau_{\top} \colon \top \to \iota \circ \pi \circ \top$ and the zero transformation $0 \colon \bot \to \iota \circ \pi \circ \top$
 induce a natural transformation $(-)[1] \to \Delta \circ \iota \circ \pi \circ \top$.
 In the resulting zig-zag $\id \leftarrow (-)[1] \to \Delta \circ \iota \circ \pi \circ \top$, the left-hand transformation becomes an equivalence in $b\cC/b^t\cC$ by \cref{lem:shifting}. Since the cofiber of the right-hand transformation lies in $b^t\cC$, this transformation also becomes an equivalence in $b\cC/b^t\cC$.
 
 Since $\Delta \circ \iota$ is a right-inverse to $\pi \circ \top$, the claim follows.
\end{proof}
\begin{rem}\label{rem:bc/btc-fc/fqc-reversed}
	By interchanging the roles of $\top$ and $\bot$, \cref{prop:bc/btc-c} also shows that the functor $\pi\circ u\circ \bot \colon B\cC/B^b\cC \to \cC$ is an equivalence.
\end{rem}

\begin{cor}\label{prop:bc/btc-fc/fqc}
 The functor $\top \colon B\cC/B^t\cC \to F\cC/F^q\cC$ is an equivalence.
\end{cor}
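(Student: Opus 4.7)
The plan is to deduce this as a two-out-of-three consequence of \cref{prop:bc/btc-c} combined with the Gillet--Waldhausen-type equivalence of \cref{lem:gillet-waldhausen}.

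First I would observe that the functor $\top \colon B\cC \to F\cC$ actually descends to the Verdier quotients under consideration: by definition of $B^t\cC$ (\cref{defn:binary-complexes}), $\top$ carries $B^t\cC$ into $F^q\cC$, so it induces the exact functor $\top \colon B\cC/B^t\cC \to F\cC/F^q\cC$ whose equivalence we must establish. Next, recall that the forgetful functor $u \colon F\cC \to f\cC$ is an equivalence by \cite[Lemma~1.2.2.4]{LurieHA}, and it visibly sends $F^q\cC$ to $f^q\cC$, so the induced functor $u \colon F\cC/F^q\cC \to f\cC/f^q\cC$ is likewise an equivalence. Finally, by \cref{lem:gillet-waldhausen}, the colimit functor $\pi \colon f\cC/f^q\cC \to \cC$ is an equivalence.

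I would then assemble these into the factorization
\[ B\cC/B^t\cC \xrightarrow{\top} F\cC/F^q\cC \xrightarrow{u} f\cC/f^q\cC \xrightarrow{\pi} \cC, \]
whose total composite is exactly the functor $\pi \circ u \circ \top$ shown to be an equivalence in \cref{prop:bc/btc-c}. Since the last two functors in the chain are equivalences, so is the first, which gives the claim.

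There is no real obstacle here; the only mildly subtle point to check is the compatibility with quotients (that $\top$ maps $B^t\cC$ to $F^q\cC$ and that $u$ maps $F^q\cC$ to $f^q\cC$), and both follow immediately from the definitions.
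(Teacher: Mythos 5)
Your argument is exactly the paper's: factor $\pi\circ u\circ\top$ through $\top$, note that $\pi\circ u$ is an equivalence by \cref{lem:gillet-waldhausen} (together with $u$ being an equivalence), and conclude by two-out-of-three from \cref{prop:bc/btc-c}. You are just slightly more explicit about the role of the forgetful functor $u$, which the paper suppresses by its overline convention.
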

\begin{proof}
 Since $\pi$ is an equivalence by \cref{lem:gillet-waldhausen} and $\pi \circ \top$ is an equivalence by \cref{prop:bc/btc-c}, the claim follows.
\end{proof}

\begin{cor}\label{cor:btc/bqc-fib-bc/bqc-fc/fqc}
 There exists a natural equivalence
 \[ \uloc(B^t\cC/B^q\cC) \simeq \fib( \uloc(B\cC/B^q\cC) \xrightarrow{\uloc(\top)} \uloc(F\cC/F^q\cC) ). \]
\end{cor}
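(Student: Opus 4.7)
The plan is to realize the asserted fiber as the first term of a cofiber sequence produced by the nested Verdier quotients associated to the inclusions $B^q\cC \hookrightarrow B^t\cC \hookrightarrow B\cC$, and then rewrite the third term using the equivalence of \cref{prop:bc/btc-fc/fqc}.

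First, I would argue that the three-term sequence
\[ B^t\cC/B^q\cC \to B\cC/B^q\cC \to B\cC/B^t\cC \]
is an exact sequence in the sense of \cref{def:exact-sequence}. Fully faithfulness of the first functor follows from fully faithfulness of $B^t\cC \hookrightarrow B\cC$, and the composite is clearly trivial. For the third condition one needs the ``third isomorphism theorem'' for Verdier quotients: the canonical comparison functor $(B\cC/B^q\cC)/(B^t\cC/B^q\cC) \to B\cC/B^t\cC$ is an equivalence on idempotent completions. This is a formal consequence of the universal property from \cref{def:quotient}, since localizing $B\cC$ at arrows with cofiber in $B^t\cC$ may be carried out in two stages, first inverting arrows with cofiber in $B^q\cC$ and then inverting those whose cofiber lies in the image of $B^t\cC$.

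Second, applying the localizing invariant $\uloc$ to this exact sequence produces a cofiber sequence
\[ \uloc(B^t\cC/B^q\cC) \to \uloc(B\cC/B^q\cC) \to \uloc(B\cC/B^t\cC), \]
which, working in the stable category $\mloc$, is equivalently a fiber sequence presenting $\uloc(B^t\cC/B^q\cC)$ as the fiber of the right-hand map. The right-hand map is induced by the projection $B\cC/B^q\cC \to B\cC/B^t\cC$; since the top projection $\top \colon B\cC \to F\cC$ sends $B^t\cC$ into $F^q\cC$ by the very definition of $B^t\cC$, it descends to a functor $B\cC/B^t\cC \to F\cC/F^q\cC$, and this functor agrees with the equivalence of \cref{prop:bc/btc-fc/fqc}. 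Substituting $\uloc(F\cC/F^q\cC)$ for $\uloc(B\cC/B^t\cC)$ in the fiber sequence above and checking that the resulting map is $\uloc(\top)$ yields the desired natural equivalence.

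The main obstacle I anticipate is the verification of the third condition in \cref{def:exact-sequence} for the nested quotients; while this ``third isomorphism theorem'' is standard in the triangulated setting, one has to spell out the argument at the level of localizations of $\infty$-categories and confirm that idempotent completion is harmless here. The remaining steps are essentially bookkeeping with the naturality of the cofiber/fiber sequence and invoking results already established earlier in \cref{sec:model}.
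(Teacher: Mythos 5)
Your proposal is correct and takes essentially the same approach as the paper: form the cofiber sequence coming from the nested Verdier quotients $B^q\cC \subseteq B^t\cC \subseteq B\cC$, apply $\uloc$, and identify the third term with $\uloc(F\cC/F^q\cC)$ via \cref{prop:bc/btc-fc/fqc}. The only difference is that you spell out the verification that $B^t\cC/B^q\cC \to B\cC/B^q\cC \to B\cC/B^t\cC$ is an exact sequence (the ``third isomorphism theorem''), which the paper leaves tacit when it simply labels the top row of its diagram a cofiber sequence.
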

\begin{proof}
 Consider the map of cofiber sequences
 \[\begin{tikzcd}[column sep=.75em]
  \uloc(B^t\cC/B^q\cC)\ar{r}\ar{d} & \uloc(B\cC/B^q\cC)\ar{r}\ar{d}{\id} & \uloc(B\cC/B^t\cC)\ar{d}{\uloc(\top)} \\
  \fib\left( \uloc(B\cC/B^q\cC) \xrightarrow{\uloc(\top)} \uloc(F\cC/F^q\cC) \right)\ar{r} & \uloc(B\cC/B^q\cC)\ar{r} & \uloc(F\cC/F^q\cC)
 \end{tikzcd}\]
 Since $\uloc(\top)\colon \uloc(B\cC/B^t\cC)\to \uloc(F\cC/F^q\cC)$ is an equivalence by \cref{prop:bc/btc-fc/fqc}, the claim follows.
\end{proof}

Combining \cref{cor:k-omega-c-fib-bc/bqc-fc/fqc} and \cref{cor:btc/bqc-fib-bc/bqc-fc/fqc} we obtain the following result.
	\begin{cor}
		\label{cor:k-omega-btc/bqc}
		There is a natural equivalence
		\[\grayson\cC\simeq\Omega\uloc(B^t\cC/B^q\cC). \]
	\end{cor}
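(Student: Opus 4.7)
The plan is simply to chain together the two equivalences supplied by the preceding corollaries. First I would apply \cref{cor:k-omega-c-fib-bc/bqc-fc/fqc} to rewrite
\[ \grayson\cC \simeq \Omega\fib\bigl( \uloc(B\cC/B^q\cC) \xrightarrow{\uloc(\top)} \uloc(F\cC/F^q\cC) \bigr). \]
Then I would apply \cref{cor:btc/bqc-fib-bc/bqc-fc/fqc}, which identifies precisely the fiber appearing inside this loop space with $\uloc(B^t\cC/B^q\cC)$. Taking loops of this identification and composing with the first equivalence yields the desired natural equivalence $\grayson\cC \simeq \Omega\uloc(B^t\cC/B^q\cC)$.

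There is essentially no obstacle: naturality of both inputs is already verified in their respective statements, and $\Omega$ is a functor on the presentable stable $\infty$--category $\mloc$, so it preserves the equivalence of fibers. Hence the composition is a natural equivalence in $\cC$, and no further argument is needed.
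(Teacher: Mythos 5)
Your proposal is correct and matches the paper's approach exactly: the paper explicitly obtains this corollary by combining \cref{cor:k-omega-c-fib-bc/bqc-fc/fqc} and \cref{cor:btc/bqc-fib-bc/bqc-fc/fqc}, which is precisely the chain of identifications you describe.
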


\begin{lem}\label{lem:binary-complex-k0}
	Let $X$ be a binary complex, i.e.~an object of $B\cC$. Then
	\[ [X^\top_{0,k}] = [X^\bot_{0,k}] \in K_0(\cC) \]
	for all $k \in \bbN$.
\end{lem}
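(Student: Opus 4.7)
The plan is to reduce the statement to the observation that in any bounded complex the filtration levels are telescopic sums of the graded pieces, so the two filtered objects $X^\top$ and $X^\bot$, which share the same underlying graded object, must yield the same classes in $K_0(\cC)$.

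More concretely, I would first fix an arbitrary bounded complex $Y \in F\cC$ and observe that the defining pushout square of \cref{defn:filtered-object}, applied with $(i,j,k) = (0,k,k+1)$, takes the form
\[\begin{tikzcd}
Y_{0,k}\ar[r]\ar[d] & Y_{0,k+1}\ar[d] \\
Y_{k,k}\ar[r] & Y_{k,k+1}
\end{tikzcd}\]
with $Y_{k,k} \simeq 0$. In a stable $\infty$--category, pushout squares are cofiber sequences, so $Y_{0,k} \to Y_{0,k+1} \to Y_{k,k+1}$ is a cofiber sequence in $\cC$. Hence in $K_0(\cC)$ we have the relation $[Y_{0,k+1}] = [Y_{0,k}] + [Y_{k,k+1}]$. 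Starting from $Y_{0,0} \simeq 0$ and iterating, this yields by induction on $k$ the identity
\[ [Y_{0,k}] = \sum_{i=0}^{k-1} [Y_{i,i+1}] = \sum_{i=0}^{k-1} [\grad(Y)_i] \in K_0(\cC), \]
where the second equality is just the definition of the underlying graded object via $i \mapsto (i,i+1)$.

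Now I would apply this identity to both $Y = X^\top$ and $Y = X^\bot$. By \cref{rem:bc-explicit-description}, an object of $B\cC$ supplies the data of a natural equivalence $\grad(X^\top) \xrightarrow{\sim} \grad(X^\bot)$ in $\bigoplus_\bbN \cC$; in particular $[\grad(X^\top)_i] = [\grad(X^\bot)_i]$ for every $i$. Summing these equalities for $i = 0, \ldots, k-1$ and combining with the two instances of the previous display yields the desired equality $[X^\top_{0,k}] = [X^\bot_{0,k}]$ in $K_0(\cC)$.

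There is no real obstacle here: the only substantive input is that pushout squares are cofiber sequences in a stable $\infty$--category (which is built into the definition of stability) and the compatibility of the graded functor $\grad$ with the pullback definition of $B\cC$. The argument is essentially a telescoping computation that does not require any idempotent completion or Heller-style manipulation.
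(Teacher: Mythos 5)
Your proof is correct and is precisely the ``easy induction'' that the paper's one-line proof alludes to: the telescoping identity $[Y_{0,k}] = \sum_{i=0}^{k-1}[\grad(Y)_i]$ obtained from the defining pushout squares, combined with the equivalence $\grad(X^\top) \simeq \grad(X^\bot)$. You have simply spelled out the details that the paper leaves implicit.
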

\begin{proof}
	This follows by an easy induction since $\grad(X^\top) \simeq \grad(X^\bot)$.
\end{proof}

\begin{defn}\label{defn:f-chi-c}
 Denote by $\cC^\chi$ the full subcategory of $\cC$ spanned by those objects $X$ satisfying $[X] = 0 \in K_0(\cC)$.
\end{defn}

By \cref{lem:binary-complex-k0}, the functor $\pi \circ \bot$ restricts to a functor $\bot^\chi \colon B^t\cC \to \cC^\chi$ which vanishes on $B^q\cC$.

\begin{prop}\label{prop:top-contractible-euler-zero}
 The functor $\bot^\chi \colon B^t\cC/B^q\cC \to \cC^\chi$ induced by $\pi \circ \bot$ is an equivalence.
\end{prop}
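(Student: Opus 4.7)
The plan is to show that $\bot^\chi$ is essentially surjective and fully faithful, combining Heller's criterion with a cofinality comparison to the equivalence $\pi\circ\bot \colon B\cC/B^b\cC \xrightarrow{\sim} \cC$ from \cref{rem:bc/btc-fc/fqc-reversed}.

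For essential surjectivity I invoke Heller's criterion (\cref{lem:hellers-criterion}). Given $X \in \cC^\chi$, the condition $[X] = 0 \in K_0(\cC)$ produces objects $A$, $B$, $S \in \cC$ together with cofiber sequences $A \to X \oplus S \to B$ and $A \to S \to B$. These assemble into a binary $\top$--acyclic complex $Y \in B^t\cC$ whose top filtration is $Y^\top = (0 \to A \to S \to 0 \to 0 \to \cdots) \in F^q\cC$, bottom filtration $Y^\bot = (0 \to A \to X \oplus S \to X \to X \to \cdots) \in F\cC$, and common associated graded $(A, B, \Sigma S, 0, 0, \ldots)$. Hence $\pi \circ \bot(Y) \simeq X$. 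Combined with \cref{lem:binary-complex-k0}, which ensures that $\bot^\chi$ lands in $\cC^\chi$, this gives essential surjectivity onto $\cC^\chi$.

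For full faithfulness, I exploit the pullback square
\[\begin{tikzcd}
B^q\cC \ar[r, hook] \ar[d, hook] & B^t\cC \ar[d, hook] \\
B^b\cC \ar[r, hook] & B\cC
\end{tikzcd}\]
of fully faithful inclusions (an object of $B\cC$ lies in $B^t\cC \cap B^b\cC$ iff it lies in $B^q\cC$). This induces a commutative diagram
\[\begin{tikzcd}
B^t\cC/B^q\cC \ar[r] \ar[d, "\bot^\chi"'] & B\cC/B^b\cC \ar[d, "\simeq"] \\
\cC^\chi \ar[r, hook] & \cC
\end{tikzcd}\]
whose right-hand vertical is the equivalence of \cref{rem:bc/btc-fc/fqc-reversed}. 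It thus suffices to show that the top arrow is fully faithful. Using the standard colimit description of mapping spaces in a Verdier quotient, this reduces to the following cofinality claim: for every $Y \in B^t\cC$ and every $Y \to Y'$ in $B\cC$ with $\cofib(Y \to Y') \in B^b\cC$, there is a factorization $Y \to Y'' \to Y'$ in $B\cC$ with $Y'' \in B^t\cC$ and $\cofib(Y \to Y'') \in B^q\cC$. I plan to build $Y''$ by a fiber-type modification of $Y'$: form $\top Y''$ as the fiber of the canonical map $\top Y' \to \iota\pi\top Y'$ (which lands in $F^q\cC$) and adjust $\bot Y''$ compatibly using the shift functor $(-)[1]$ and \cref{lem:shifting}, in the spirit of the zig-zag argument in the proof of \cref{prop:bc/btc-c}.

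The main obstacle is rigorously producing this factorization: modifying $\top$ via the fiber construction forces a corresponding modification of $\bot$ so that the graded identification is preserved, and one must verify that the resulting $\cofib(Y \to Y'')$ lies in $B^q\cC$. I expect this to be feasible because the condition $\cofib(Y \to Y') \in B^b\cC$ already forces $\colim \bot Y \simeq \colim \bot Y'$, and because \cref{lem:shifting} ensures that adjustments via $(-)[1]$ only change $Y'$ up to a member of the diagonal image of $F^q\cC \subseteq B^q\cC$.
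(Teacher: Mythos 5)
Your essential surjectivity argument is correct, and is in fact more direct than the paper's own (which obtains a preimage of $X$ by applying a general construction, \cref{const:mu-x}, to $\Delta(\iota(X))$). The overall strategy — essential surjectivity of $\bot^\chi$ plus full faithfulness of $B^t\cC/B^q\cC\to B\cC/B^b\cC$, concluded via \cref{rem:bc/btc-fc/fqc-reversed} — also matches the paper's.

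However, the cofinality claim for the full faithfulness step is stated in the wrong direction, and this is a genuine gap. By \cite[Theorem~I.3.3]{NikolausScholze}, $\Map_{b^t\cC/b^q\cC}(X,Y)$ is computed as $\colim_{Z\to Y\in b^q\cC_{/Y}}\Map(X,\cofib(Z\to Y))$, and one needs the inclusion $b^q\cC_{/Y}\subseteq b^b\cC_{/Y}$ of indexing categories to be \emph{cofinal}. Cofinality means: for each $Z\to Y$ with $Z\in b^b\cC$, one must produce a map $Z\to Z'$ over $Y$ with $Z'\in b^q\cC$ — equivalently, in your ``replacement of the target'' language, given $Y\to Y'$ with $\cofib\in B^b\cC$, one must \emph{extend further} by a map $Y'\to Y''$ so that $\cofib(Y\to Y'')\in B^q\cC$. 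You instead propose a factorization $Y\to Y''\to Y'$, which produces a morphism from the smaller indexing category into the larger one; that is coinitiality, not cofinality, and does not help compute the colimit. Consistently with this mix-up, your fiber-type modification $\top Y''=\fib(\top Y'\to\iota\pi\top Y')$ shrinks $Y'$, whereas what is needed is to enlarge it. The paper's \cref{const:mu-x} does exactly that: it uses Heller's criterion (the same input as your essential-surjectivity step) to append a short filtered piece to the top and a compatible direct-summand correction to the bottom, producing a map $W\to\mu(W)$ with $\mu(W)\in b^q\cC$; one then shows the relevant comma category is filtered. Your fiber construction cannot serve as the required cone object, and the graded compatibility problem you flagged (the cofiber of $\top Y''\to\top Y'$ is a constant filtration with nontrivial lowest graded piece) is a symptom of going in the wrong direction rather than a technicality to be patched.
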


The proof of \cref{prop:top-contractible-euler-zero} relies on the following construction.

\begin{const}\label{const:mu-x}
 Let $X \in b\cC$ be a binary complex supported on $[0,r]$ satisfying $[\colim X^\top] = 0 \in K_0(\cC)$, and hence also $[\colim X^\bot] = 0 \in K_0(\cC)$ by \cref{lem:binary-complex-k0}.
 Let $k \geq r$.
 Choose objects $A$, $B$, $S \in \cC$ which fit into cofiber sequences
 \[ A \xrightarrow{a} X^\top_k \oplus S \xrightarrow{b} B \quad \text{and} \quad A \xrightarrow{a'} S \xrightarrow{b'} B. \]
 These exist by virtue of \cref{lem:hellers-criterion}.
 
 Define $C \in f\cC$ as the filtered object
 \[ 0 \to \dots \to 0 \to A \xrightarrow{a} X^\top_k \oplus S \to 0 \to 0 \to \dots, \]
 where $A$ sits in degree $k-1$ and $X^\top_k \oplus S$ occupies degree $k$.
 
 Define a second filtered object $Y^\top$ as
 \[ X^\top_0 \to \dots \to X^\top_{k-2} \to X^\top_{k-1} \oplus A \xrightarrow{X^\top(k-1 \leq k) \oplus a'} X^\top_k \oplus S \to 0 \to \dots \]
 We observe that $\grad(Y^\top)$ and $\grad(X^\bot \oplus C)$ are canonically equivalent.
 Hence, the given data combine to a new binary complex which we denote by $\mu(X)$.
 Note that $\mu(X)$ depends on the choice of $k$ and the cofiber sequences $A \to X_k \oplus S \to B$ and $A \to S \to B$.
 
 Moreover, there is a canonical morphism $m \colon X \to \mu(X)$ such that $m^\bot$ is given by the inclusion $X^\bot \to X^\bot \oplus C$,
 and such that $m^\top$ is given by the inclusion of a direct summand up to degree $k$.
\end{const}

\begin{proof}[Proof of \cref{prop:top-contractible-euler-zero}]
 It suffices to show that $\bot^\chi \colon b^t\cC/b^q\cC \to \cC^\chi$ is an equivalence.
 By taking vertical Verdier quotients in the commutative square of exact functors
 \[\begin{tikzcd}
  b^q\cC\ar{r}\ar{d} & b^b\cC\ar{d} \\
  b^t\cC\ar{r} & b\cC
 \end{tikzcd}\]
 we obtain an exact functor $i \colon b^t\cC/b^q\cC \to b\cC/b^b\cC$.
 This functor fits into the commutative square of exact functors
 \[\begin{tikzcd}
  b^t\cC/b^q\cC\ar{r}{i}\ar{d}[swap]{\bot^\chi} & b\cC/b^b\cC\ar{d}{\pi\circ\bot} \\
  \cC^\chi\ar{r}{j} & \cC
 \end{tikzcd}\]
 We make the following claims:
 \begin{enumerate}
  \item\label{it:top-contractible-euler-zero-3} $\bot^\chi$ is essentially surjective.
  \item\label{it:top-contractible-euler-zero-1} $i$ is fully faithful.
 \end{enumerate}
 Since these claims in conjunction with \cref{rem:bc/btc-fc/fqc-reversed} imply that $\bot^\chi$ is an equivalence, it suffices to prove the claims.
 
 Let us first show claim~\eqref{it:top-contractible-euler-zero-3}.
 For any $X \in \cC^\chi$, apply \cref{const:mu-x} to $\Delta(\iota(X))$ to obtain a preimage of $X$ under $\bot^\chi$.
  
 For claim~\eqref{it:top-contractible-euler-zero-1}, we rely on \cite[Theorem~I.3.3]{NikolausScholze} to reduce the claim to showing that the canonical map
 (induced by the inclusion of indexing categories)
 \[ \colim_{Z \to Y \in b^q\cC_{/Y}} \Map_{b^t\cC}(X, \cofib(Z \to Y)) \to \colim_{Z \to Y \in b^b\cC_{/Y}} \Map_{b\cC}(X, \cofib(Z \to Y)) \]
 is an equivalence.
 Since $b^t\cC \to b\cC$ is fully faithful, it suffices to show that the inclusion $b^q\cC_{/Y} \subseteq b^b\cC_{/Y}$ is cofinal.
 Using \cite[Theorem~4.1.3.1]{MR2522659}, this in turn can be reduced to showing that
 \[ \cF := b^q\cC_{/Y} \times_{b^b\cC_{/Y}} (b^b\cC_{/Y})_{(Z \to Y)/} \]
 is weakly contractible for all $Z \to Y \in b^b\cC_{/Y}$.
 In fact, we claim that $\cF$ is filtered.
 
 Let $K$ be a finite simplicial set, and suppose we are given a map $f \colon K \to \cF$.
 Then $f$ corresponds to a diagram $\Delta^0 \star K \to b^b\cC_{/Y}$ with the following properties:
 \begin{itemize}
  \item The restriction to $\Delta^0 \star \emptyset$ classifies the object $Z \to Y$.
  \item The restriction to $K$ factors via $b^q\cC_{/Y}$.
 \end{itemize}
 Since $b^b\cC$ admits finite colimits, there is a universal cone $\hat{f}' \colon (\Delta^0 \star K)^\rhd \to b^b\cC_{/Y}$.
 
 Let $W \in b^b\cC$ denote the object classified by the cone point of $\hat{f}'$.
 Choose $k \in \bbN$ such that both $Y$ and $W$ are supported on $[0,k]$.
 Now apply \cref{const:mu-x} to $W$ to obtain $\mu(W)$.
 We claim that the dashed arrow in the following diagram can be filled in such that the resulting triangle commutes:
 \[\begin{tikzcd}
    W\ar{r}{m}\ar{dr} & \mu(W)\ar[dashrightarrow]{d} \\
     & Y
   \end{tikzcd}\]
 To construct the required $2$--simplex in $b\cC$, we rely on \cref{rem:bc-explicit-description}.
 
 For any filtered object $V$, let $V^{\leq k}$ denote the restriction of $V$ along the inclusion $[0,k] \to \bbN$.
 By construction, we find commutative diagrams
 \[\begin{tikzcd}
    \topbot(W)^{\leq k}\ar[r, "m"]\ar[dr] & \topbot(\mu(W))^{\leq k}\ar[d] \\
     & \topbot(Y)^{\leq k}
   \end{tikzcd}\]
 for $\topbot \in \{\top,\bot\}$ since $\topbot(W)^{\leq k}$ is a direct summand in $\topbot(\mu(W))^{\leq k}$.
 These diagrams extend essentially uniquely to commutative diagrams in $f\cC$ since $\topbot(Y)(i) \simeq 0$ for $i > k$.
 By definition, the chosen equivalences of graded objects patch together to yield a $2$--simplex in $b\cC$.
 
 It follows that we can replace the cone point $W$ by $\mu(W)$ and still obtain a diagram $\hat{f} \colon (\Delta^0 \star K)^\rhd \to b^b\cC_{/Y}$.
 Since $\mu(W) \in b^q\cC$, this corresponds to a diagram $K^\rhd \to \cF$, so $\cF$ is filtered.
   
 This proves that the inclusion $b^q\cC_{/Y} \subseteq b^b\cC_{/Y}$ is cofinal;
 thus, we have shown claim~\eqref{it:top-contractible-euler-zero-1}.
\end{proof}

\begin{proof}[Proof of \cref{thm:mainintro}]
 Combining \cref{cor:k-omega-btc/bqc} and \cref{prop:top-contractible-euler-zero},
 we obtain the following sequence of natural equivalences:
 \[ \grayson\cC \simeq \Omega\uloc(B^t\cC/B^q\cC) \simeq \Omega\uloc(\cC^\chi).\]
 Since every object in $\cC$ is a retract of an object in $\cC^\chi$, it follows that $\uloc(\cC^\chi) \simeq \uloc(\cC)$.
 This proves the theorem.
\end{proof}

Since the diagonal $F^q \to B^q$ defines a natural transformation of endofunctors on $\catex$, the construction $\grayson\cC$ can be iterated.
Any word $\bfW$ of length $n$ over the alphabet $\{F^q, B^q\}$ defines a stable $\infty$--category $\bfW\cC$.
Letting the word $\bfW$ vary, the $2^n$ possible choices of $\bfW$ assemble into a commutative cube of dimension $n$.
By taking the $n$--fold iterated cofiber of this cube, we obtain a motive $\grayson^n\cC$.
To make sense of the naturality of the assignment $\cC \mapsto \grayson^n\cC$, we choose, once and for all, one preferred order of taking cofibers,
say reading the word $\bfW$ from left to right.

\begin{cor}\label{cor:iterated-grayson}
  For all $n \geq 1$ there is a natural equivalence
 \[ \grayson^n\cC \simeq \Omega^n\uloc(\cC). \]
\end{cor}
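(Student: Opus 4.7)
The plan is to argue by induction on $n$, with the base case $n=1$ being exactly \cref{thm:mainintro}. The inductive step will rely on the standard fact that the total cofiber of an $n$-cube in a stable $\infty$-category is equivalent to the cofiber of the induced morphism between the total cofibers of any pair of opposite $(n-1)$-dimensional faces; in particular, the chosen iteration order is immaterial up to canonical equivalence.

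Fix $n \geq 2$ and assume the statement holds for $n-1$. The $n$-cube defining $\grayson^n\cC$ has vertices $\uloc(\bfW\cC)$ indexed by words $\bfW = W_1 \cdots W_n$ over $\{F^q, B^q\}$, with edges given by $\uloc$ of the diagonal at the appropriate letter. Fixing the innermost letter $W_n$ exhibits this cube as a morphism from an $(n-1)$-cube $C_0$ to an $(n-1)$-cube $C_1$, where $C_\epsilon$ has vertices $\uloc(\bfW'(F^q\cC))$ for $\epsilon = 0$ and $\uloc(\bfW'(B^q\cC))$ for $\epsilon = 1$, with $\bfW'$ ranging over words of length $n-1$. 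By functoriality of the assignment $\cD \mapsto \grayson^{n-1}\cD$ in its input stable $\infty$-category, $C_\epsilon$ is precisely the cube whose total cofiber computes $\grayson^{n-1}(F^q\cC)$, respectively $\grayson^{n-1}(B^q\cC)$, and the morphism $C_0 \to C_1$ is induced by $\uloc(\Delta) \colon \uloc(F^q\cC) \to \uloc(B^q\cC)$.

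Consequently, $\grayson^n\cC \simeq \cofib\bigl(\grayson^{n-1}(F^q\cC) \to \grayson^{n-1}(B^q\cC)\bigr)$. The inductive hypothesis, applied to both $F^q\cC$ and $B^q\cC$, rewrites this as $\cofib\bigl(\Omega^{n-1}\uloc(\Delta)\bigr)$. Because $\mloc$ is stable, the endofunctor $\Omega^{n-1}$ commutes with cofibers, so this is equivalent to $\Omega^{n-1}\cofib(\uloc(\Delta)) = \Omega^{n-1}\grayson\cC$, which by the base case is $\Omega^n\uloc(\cC)$.

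The only subtlety I anticipate is bookkeeping: verifying that the identification of the two opposite faces $C_0, C_1$ with the cubes defining $\grayson^{n-1}(F^q\cC)$ and $\grayson^{n-1}(B^q\cC)$ is natural in $\cC$ and compatible with the fixed iteration order from the paper. Since the total cofiber of a cube is intrinsic to the cube and each naturality check involves only $\uloc$, $\Delta$, and $\Omega^{n-1}$, this is a routine but careful diagram chase rather than a genuine obstacle.
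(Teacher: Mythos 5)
Your proof is correct and follows the same strategy as the paper's: induct on $n$, peel off the innermost letter of the word to write $\grayson^n\cC$ as $\cofib(\grayson^{n-1}(F^q\cC) \to \grayson^{n-1}(B^q\cC))$, apply the inductive hypothesis to $F^q\cC$ and $B^q\cC$, commute $\Omega^{n-1}$ past the cofiber, and finish with the base case. The paper leaves implicit the total-cofiber/opposite-faces identification that you spell out, but the argument is the same.
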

\begin{proof}
 The proof is by induction, with the case $n=1$ being \cref{thm:mainintro}.
 By induction hypothesis, and using \cref{thm:mainintro} again, we have
 \begin{align*}
  \grayson^{n+1}\cC
  &\simeq \cofib( \grayson^nF^q\cC \xrightarrow{\grayson^n\Delta} \grayson^nB^q\cC) \\
  &\simeq \cofib( \Omega^n\uloc(F^q\cC) \xrightarrow{\uloc(\Delta)} \Omega^n\uloc(B^q\cC) ) \\
  &\simeq \Omega^n \grayson\cC \\
  &\simeq \Omega^{n+1}\uloc(\cC).\qedhere
 \end{align*} 
\end{proof}

\begin{cor}\label{cor:iterated-grayson-K}
 For all $n \geq 1$ there are natural equivalences
 \[ \bfK(\grayson^n\cC) \simeq \Omega^n\bfK(\cC) \quad \text{and} \quad \Kcon(\grayson^n\cC) \simeq \Omega^n\tau_{\geq n}\Kcon(\cC). \]
\end{cor}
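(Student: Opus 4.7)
Both equivalences are formal consequences of Corollary~\ref{cor:iterated-grayson}, which identifies $\grayson^n\cC$ with $\Omega^n\uloc(\cC)$ in $\mloc$. There are no substantial new ideas required; the argument is essentially to propagate this equivalence along $K$--theory and then truncate appropriately.

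For the first equivalence, my plan is to exploit the corepresentability formula \eqref{eq:corep}, which exhibits $\bfK$ as the restriction along $\uloc$ of the exact functor
\[ \Map_{\mloc}(\uloc(\Sp^\omega),-) \colon \mloc \to \Sp. \]
Since any exact functor between stable $\infty$--categories commutes with the loop functor $\Omega$, applying this functor to the equivalence $\grayson^n\cC \simeq \Omega^n\uloc(\cC)$ immediately produces
\[ \bfK(\grayson^n\cC) \simeq \Omega^n\bfK(\cC). \]

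For the connective version, I would combine the previous paragraph with the definition $\Kcon \simeq \tau_{\geq 0}\circ\bfK$. The only additional ingredient is the natural equivalence
\[ \tau_{\geq 0}\Omega^n X \simeq \Omega^n\tau_{\geq n}X \]
for every spectrum $X$ and every $n \geq 0$. This is the one step that requires a brief argument: the canonical map $\Omega^n\tau_{\geq n}X \to \Omega^n X$ has connective source, so it factors uniquely through $\tau_{\geq 0}\Omega^n X$, and a homotopy-group computation shows that this factorization is an equivalence. Combining these facts and using $\tau_{\geq n}\tau_{\geq 0}\simeq \tau_{\geq n}$ for $n \geq 0$ then gives
\[ \Kcon(\grayson^n\cC) \simeq \tau_{\geq 0}\Omega^n\bfK(\cC) \simeq \Omega^n\tau_{\geq n}\bfK(\cC) \simeq \Omega^n\tau_{\geq n}\Kcon(\cC). \]
The whole argument is essentially bookkeeping. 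The only potential pitfall is to handle the truncation identity correctly in the connective case; once that is noted, no step presents a genuine obstacle.
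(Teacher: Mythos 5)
Your proposal is correct and follows the same route as the paper: the non-connective case is an immediate consequence of \cref{cor:iterated-grayson} together with the (corepresented, hence exact) nature of $\bfK$ on motives, and the connective case is obtained by applying connective covers via the identity $\tau_{\geq 0}\Omega^n \simeq \Omega^n\tau_{\geq n}$. The paper's proof simply compresses exactly these two observations into two sentences, so the approach is the same, just spelled out more explicitly in your version.
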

\begin{proof}
 Using \cref{cor:iterated-grayson}, the statement about non-connective $K$--theory is immediate because $\bfK$ is a localizing invariant.
 The claim about connective $K$--theory follows by taking connective covers.
\end{proof}

From \cref{cor:iterated-grayson-K}, we can now deduce a more algebraic description of higher $K$--groups resembling \cite[Corollary~7.4]{Grayson2012}.

\begin{prop}\label{prop:k_n-b-generators-and-relations}
 Let $\cC$ be an idempotent complete stable $\infty$--category and let $n \geq 1$.
 Then $\pi_n\bfK(\cC) \cong \pi_0\bfK(\grayson^n\cC)$ admits the following presentation.
 
 It is the abelian group generated by equivalence classes $[X]$ of objects in $(B^q)^n\cC$, subject to the relations
 \begin{enumerate} 
  \item $[X] = [X'] + [X'']$ whenever there exists a cofiber sequence $X' \to X \to X''$ in $(B^q)^n\cC$;
  \item $[X] = 0$ if $X$ lies in the essential image of some diagonal functor
   \[ (B^q)^kF^q(B^q)^{n-k-1}\cC \xrightarrow{\Delta} (B^q)^n\cC,\qquad 0 \leq k \leq n-1. \]
 \end{enumerate}
 By abuse of notation, we will refer to the above group also by $K_0(\grayson^n\cC)$.
\end{prop}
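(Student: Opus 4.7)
The plan is to argue by induction on $n$, using the recursive description
\[ \grayson^{n+1}\cC \simeq \cofib\bigl(\grayson^nF^q\cC \xrightarrow{\grayson^n\Delta} \grayson^nB^q\cC\bigr) \]
appearing in the proof of \cref{cor:iterated-grayson}, and exploiting at every stage that each instance of $\uloc(\Delta)$ is split by the corresponding $\uloc(\top)$.

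Before starting the induction, I would first check that $\bfW\cC$ is idempotent complete for every word $\bfW$ in $\{F^q,B^q\}$, so that $\pi_0\bfK(\bfW\cC)=K_0(\bfW\cC)$ admits the standard presentation by cofiber-sequence relations recalled in \cref{sec:lower-k-theory}. This reduces to showing that $F^q$ and $B^q$ preserve idempotent completeness: $F^q\cC$ sits as a full subcategory of $\Fun(\gapindex{\bbN},\cC)$ closed under retracts, and $B^q\cC$ is a pullback of such categories.

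The base case $n=1$ is quick: since $\top\circ\Delta=\id$, the map $\uloc(\Delta)$ is a split monomorphism in $\mloc$, so applying $\bfK = \Map(\uloc(\Sp^\omega),-)$ yields a splitting $\bfK(B^q\cC)\simeq \bfK(F^q\cC)\oplus \bfK(\grayson\cC)$. Hence $\pi_0\bfK(\grayson\cC)$ is the quotient of $K_0(B^q\cC)$ by the subgroup generated by the classes $[\Delta(Y)]$ for $Y\in F^q\cC$, which is precisely the claimed presentation.

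For the inductive step, the same split-mono argument applied to $\grayson^n\Delta$ (split by $\grayson^n\top$) gives
\[ \pi_0\bfK(\grayson^{n+1}\cC) \cong \operatorname{coker}\bigl(\pi_0\bfK(\grayson^nF^q\cC)\to \pi_0\bfK(\grayson^nB^q\cC)\bigr). \]
Applying the induction hypothesis to the idempotent complete categories $F^q\cC$ and $B^q\cC$ presents both sides in terms of $K_0((B^q)^nF^q\cC)$ and $K_0((B^q)^{n+1}\cC)$ modulo the diagonal relations for $k=0,\dots,n-1$. Naturality of the surjection $K_0((B^q)^n(-))\twoheadrightarrow \pi_0\bfK(\grayson^n(-))$ in its argument then identifies the image appearing in the cokernel with the image of $(B^q)^n\Delta\colon K_0((B^q)^nF^q\cC)\to K_0((B^q)^{n+1}\cC)$, adjoining to the inherited relations exactly the missing $k=n$ diagonal relation. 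Passing to the quotient yields the desired presentation. The main obstacle will be this naturality step: one has to arrange the inductive presentations so that the comparison morphism on cokernels is visibly induced by the outermost diagonal, which is a diagram chase but takes some care to make precise.
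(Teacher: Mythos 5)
Your proposal is correct and follows essentially the paper's proof: the base case $n=1$ proceeds exactly as in the paper (idempotent completeness of $F^q\cC$ and $B^q\cC$, then the split short exact sequence $0 \to K_0(F^q\cC) \to K_0(B^q\cC) \to \pi_0\bfK(\grayson\cC) \to 0$ coming from $\top\circ\Delta\simeq\id$), and your explicit induction via $\grayson^{n+1}\cC\simeq\cofib(\grayson^nF^q\cC\to\grayson^nB^q\cC)$, tracking the $k=n$ diagonal relation in the cokernel, is the natural way to unfold what the paper compresses into ``the general case follows by considering cubes of higher dimensions.'' The only cosmetic difference is the idempotent-completeness check: the paper argues via existence of the relevant colimits in functor categories and in the pullback (citing Lurie), whereas you phrase it in terms of closure under retracts, which comes to the same thing.
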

\begin{proof}
 Consider first the case $n=1$.
 Since idempotent completeness can be characterized by the existence of certain colimits in $\cC$ (cf.~\cite[Section~4.4.5]{MR2522659}),
 it follows from \cite[Corollary~5.1.2.3]{MR2522659} that $F^q\cC$ is idempotent complete.
 As $\grad \colon F^q\cC \to \bigoplus_\bbN \cC$ preserves colimits,
 $B^q\cC$ is idempotent complete by \cite[Lemma~5.4.5.5]{MR2522659}.
 This allows us to identify $K_0(F^q\cC) \cong \pi_0\bfK(F^q\cC)$ and $K_0(B^q\cC) \cong \pi_0\bfK(B^q\cC)$.
 
 Since the diagonal functor $\Delta \colon F^q\cC \to B^q\cC$ admits a retraction (by $\top$ or $\bot$), we obtain a split exact sequence of abelian groups
 \[ 0 \to K_0(F^q\cC) \to K_0(B^q\cC) \to \pi_0\bfK(\grayson\cC) \to 0. \]
 By \cref{cor:iterated-grayson}, we have $\pi_1\bfK(\cC) \cong \pi_0\bfK(\grayson\cC)$, and the claim follows immediately.
 
 The general case follows by considering cubes of higher dimensions.
\end{proof}

\section{Shortening binary complexes}\label{sec:shortening}
Our next goal is to show that the explicit description of $K_n$ in \cref{prop:k_n-b-generators-and-relations} includes a large number of superfluous generators and relations.
To make this statement precise, we introduce the following variations of the categories $F^q\cC$, $B^q\cC$ and the motive $\grayson\cC$. In this section, we always assume that $\cC$ is idempotent complete.

\begin{defn}
 Let $r \geq 0$. Denote by $F^q_r\cC \subseteq F^q\cC$ the full subcategory of bounded filtrations supported on $[0,r]$.
 Define $B^q_r\cC$ as the pullback of the diagram
 \[ F^q_r\cC \xrightarrow{\grad} \bigoplus_{i = 0}^r \cC \xleftarrow{\grad} F^q_r\cC, \]
 and set
 \[ \grayson_r\cC := \cofib( \uloc(F^q_r\cC) \xrightarrow{\uloc(\Delta)} \uloc(B^q_r\cC)) \in \mloc. \]
\end{defn}

The evident inclusion functors induce a map $\grayson_r\cC \to \grayson\cC$.
The key result of this section, \cref{prop:split} below, states that the induced homomorphism
$K_0(\grayson_r\cC) \to K_0(\grayson\cC)$ admits a natural section for $r = 7$.

For convenience, we regard $\grayson_r\cC$ as the cofiber of the map $\uloc(f^q_r\cC) \xrightarrow{\uloc(\Delta)} \uloc(b^q_r\cC)$.

Note that the complexes in $f\cC$ start with a zero object in degree 0. In this section we will suppress this zero and write complexes starting with degree 1.

Let $X \in b^q\cC$. By \cref{lem:hellers-criterion,lem:binary-complex-k0} we can choose objects $A_k,B_k,S_k$
fitting into cofiber sequences $A_k\to X^\top_k\oplus S_k\to B_k$ and $A_k \to X^\bot_k \oplus S_k \to B_k$ for each $k \geq 3$.
If $X^\top_k$ and $X^\bot_k$ are both trivial, also choose $A_k, B_k$ and $S_k$ to be trivial. 
Let $f_k^\top \colon B_k \to \Sigma A_k$ and $f_k^\bot \colon B_k\to \Sigma A_k$ be the induced maps.
Define $Y_2$ to be the complex with
\[ Y_2^\top := \begin{tikzcd}
		X^\top_1 \ar[r] & X^\top_2\ar[r] & X^\top_3\ar[d, phantom, "\oplus"] \ar[r] & 0 \ar[r] & 0 \ar[r] & \dots \\
		0 \ar[r] & 0 \ar[r] & B_3\ar[r, "f_3^\bot"] & \Sigma A_3 \ar[r] & 0 \ar[r] & \dots
		\end{tikzcd} \]
and
\[ Y_2^\bot := \begin{tikzcd}
		X^\bot_1 \ar[r] & X^\bot_2\ar[r] & X^\bot_3\ar[d, phantom, "\oplus"] \ar[r] & 0 \ar[r] & 0 \ar[r] & \dots \\
		0 \ar[r] & 0 \ar[r] & B_3\ar[r, "f_3^\top"] & \Sigma A_3 \ar[r] & 0 \ar[r] & \dots
		\end{tikzcd} \]
By construction, there is a canonical equivalence $\grad(Y^\top_2) \simeq \grad(Y^\bot_2)$.

Similarly, define $Y_k$ for $k \geq 3$ to be the complex with
\[ Y_k^\top := \begin{tikzcd}
		\Sigma^{-1} B_k\ar[r, "\Sigma^{-1}f_k^\bot"] & A_k \ar[r] & 0\ar[d, phantom, "\oplus"] \ar[r] & 0\ar[r] & 0\ar[r] & \dots \\
		0\ar[r] & X^\top_k \ar[r] & X^\top_{k+1}\ar[r]\ar[d, phantom, "\oplus"] & 0 \ar[r] & 0 \ar[r] & \dots \\
		0 \ar[r] & 0 \ar[r] & B_{k+1}\ar[r, "f_{k+1}^\bot"] & \Sigma A_{k+1} \ar[r] & 0 \ar[r] & \dots
		\end{tikzcd} \]
and
\[ Y_k^\bot := \begin{tikzcd}
		\Sigma^{-1} B_k\ar[r, "\Sigma^{-1}f_k^\top"] & A_k \ar[r] & 0\ar[d, phantom, "\oplus"] \ar[r] & 0\ar[r] & 0\ar[r] & \dots \\
		0\ar[r] & X^\bot_k \ar[r] & X^\bot_{k+1}\ar[r]\ar[d, phantom, "\oplus"] & 0 \ar[r] & 0 \ar[r] & \dots \\
		0 \ar[r] & 0 \ar[r] & B_{k+1}\ar[r, "f_{k+1}^\top"] & \Sigma A_{k+1} \ar[r] & 0 \ar[r] & \dots
		\end{tikzcd} \]
It follows again from the choice of $f_k^\top$ and $f_k^\bot$ that we have canonical equivalences $\grad(\top(Y_k)) \simeq \grad(\bot(Y_k))$. Note that $Y_k$ is an object of $b_5^q\cC$.
\begin{prop}
	\label{prop:surj}
	In $K_0(\grayson\cC)$ we have
	\[[X]=\sum_{k\geq 2}[Y_k].\]
	In particular, $K_0(\grayson_5\cC) \to K_0(\grayson\cC)$ is surjective.
\end{prop}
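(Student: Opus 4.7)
Since the choices $A_k, B_k, S_k$ are zero whenever $X^\top_k = X^\bot_k = 0$, the sum $\sum_{k \geq 2}[Y_k]$ is finite: assuming $X$ is supported on $[0,N]$, we have $Y_k = 0$ for $k > N$. Granting the identity $[X] = \sum_{k = 2}^{N}[Y_k]$, the surjectivity of $K_0(\grayson_5\cC) \to K_0(\grayson\cC)$ is immediate since each $Y_k$ lies in $b^q_5\cC$.

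The plan for the identity is to work within the explicit presentation of $K_0(\grayson\cC)$ from \cref{prop:k_n-b-generators-and-relations} and exhibit a telescope of cofiber sequences
\[ Y_k \longrightarrow T_k \longrightarrow T_{k+1} \qquad (k = 2, \dots, N)\]
in $b^q\cC$, with auxiliary binary complexes $T_k \in b^q\cC$ chosen so that $[T_2] = [X]$ and $[T_{N+1}] = 0$ in $K_0(\grayson\cC)$. Iterating then telescopes to $[X] = [T_2] = \sum_{k=2}^{N}[Y_k]$. The complex $T_k$ would be designed to share the shape of $Y_k$ on its initial positions (incorporating the shifted cofiber data $\Sigma^{-1}B_k, A_k$), while its tail encodes the portion of $X$ starting at position $k+1$, suitably twisted so that the cofiber of $Y_k \to T_k$ is again a binary complex of the same form, namely $T_{k+1}$. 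The shared cofiber data $(A_{k+1}, B_{k+1}, S_{k+1})$, which appear in both $Y_k$ (positions $3$--$4$) and $Y_{k+1}$ (positions $1$--$2$), is exactly the glue that makes the telescope fit together. The boundary conditions hold because $T_2$ would differ from $X$ only by a binary complex in the essential image of $\Delta\colon f^q\cC \to b^q\cC$ (arising from the cofiber sequence $A_3 \to X^\top_3 \oplus S_3 \to B_3$ and its $\bot$-analog), while $T_{N+1}$ lies in this image since all the remaining cofiber data vanishes.

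The main obstacle is the explicit bookkeeping: defining each $T_k$ as an honest object of $b^q\cC$ with a coherent graded equivalence $\grad T^\top_k \simeq \grad T^\bot_k$, and verifying that each triangle $Y_k \to T_k \to T_{k+1}$ is really a cofiber sequence in $b^q\cC$. Using the pullback description from \cref{rem:bc-explicit-description}, this reduces at each position to diagram chases with the chosen cofiber sequences $A_k \to X^\top_k \oplus S_k \to B_k$ and $A_k \to X^\bot_k \oplus S_k \to B_k$, together with their $\Sigma^{\pm 1}$-shifted versions. Once the telescope is in place, the desired identity and the surjectivity follow formally.
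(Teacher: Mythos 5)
Your strategy---iteratively peeling off each $Y_k$ via a cofiber sequence in $b^q\cC$, using the presentation of $K_0(\grayson\cC)$ from \cref{prop:k_n-b-generators-and-relations}---is essentially the one the paper uses; the paper carries out the first step of your telescope explicitly and then inducts on the length of the support. But there is a gap in your outline, and it hides a genuine point. You posit a map $Y_k \to T_k$ (equivalently, a map $T_k \to Y_k$ with cofiber $T_{k+1}$) without saying where it comes from, and you attribute the defect between $T_2$ and $X$ to the cofiber sequence $A_3 \to X^\top_3 \oplus S_3 \to B_3$. That is not the mechanism. In the paper, the relevant map is defined on the shift $X[1]$, not on $X$ itself: the morphism $X[1]\to Y_2$ exists precisely because the degree shift allows the component into $\Sigma A_3$ to be taken to be zero, which is what makes the filtration squares commute. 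The identity $[X]=[X[1]]$ is then supplied by \cref{lem:shifting}, not by the choice of $A_3, B_3, S_3$.

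Moreover, the fiber of $X[1]\to Y_2$ is not itself a shorter binary acyclic complex: it splits as a diagonal summand (which dies in $K_0(\grayson\cC)$) plus a complex $F_1$, and only after extracting that summand and shifting $F_1$ down does the induction---or, equivalently, your telescope---close up. So the shape of your argument is right, but producing the $T_k$ as honest objects of $b^q\cC$ requires both the shift trick and the splitting of the fiber, neither of which your sketch supplies; these are the actual content of the proof rather than mere bookkeeping.
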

\begin{proof}
	We denote the morphisms $X_k^\top\to X_{k+1}^\top$ by $x^\top_k$.
	Let $X^\top_{k+1}/X^\top_k:=\cofib(x_k^\top)$ and $X^\bot_{k+1}/X^\bot_k := \cofib(x_k^\bot)$.
	We have a morphism $X[1]\to Y_2$ with top component given as follows and bottom component given analogously. The induced map on the underlying graded is trivial.
	\[\begin{tikzcd}	0\ar[r]\ar[d]&X^\top_1\ar[d,"x_1^\top"]\ar[r,"x_1^\top"]&X^\top_2\ar[d,"{(x_2^\top,0)}"]\ar[r,"x_2^\top"]&X^\top_3\ar[d,"0"]\ar[r,"x_3^\top"]&X^\top_4\ar[d,"0"]\ar[r,"x_4^\top"]&\ldots\\
	X_1^\top\ar[r,"x_1^\top"]&X_2^\top\ar[r,"{(x_2^\top,0)}"]&X_3^\top\oplus B_3\ar[r,"0+f_3^\bot"]&\Sigma A_3\ar[r]&0\ar[r]&\ldots\end{tikzcd}\]
	By inspection, we see that the fiber has top component
	\[\Sigma^{-1}X_1^\top \xrightarrow{0} \Sigma^{-1} X^\top_2/X^\top_1\xrightarrow{0}\Sigma^{-1}X^\top_3/X^\top_2\oplus\Sigma^{-1}B_3 \xrightarrow{0\oplus \Sigma^{-1} f^\bot_3} X^\top_3 \oplus A_3 \xrightarrow{x^\top_3 + 0} X^\top_4 \to \dots \]
	and bottom component
	\[\Sigma^{-1}X_1^\bot \xrightarrow{0} \Sigma^{-1} X^\bot_2/X^\bot_1\xrightarrow{0}\Sigma^{-1}X^\bot_3/X^\bot_2\oplus\Sigma^{-1}B_3 \xrightarrow{0\oplus \Sigma^{-1} f^\top_3} X^\bot_3 \oplus A_3 \xrightarrow{x^\bot_3 + 0} X^\bot_4 \to \dots \]
	Note that the fiber splits as the direct sum of the diagonal of the complex
	\[\Sigma^{-1}X_1^\top \xrightarrow{0} \Sigma^{-1} X^\top_2/X^\top_1\xrightarrow{0}\Sigma^{-1}X^\top_3/X^\top_2\to 0\to\ldots\]
	and the complex $F_1$ with top component
	\[ 0 \to 0 \to \Sigma^{-1}B_3 \xrightarrow{(0, \Sigma^{-1} f^\bot_3)} X^\top_3 \oplus A_3 \xrightarrow{x_3^\top + 0} X^\top_4 \to X^\top_5 \to \dots \]
	and bottom component
	\[ 0 \to 0 \to \Sigma^{-1}B_3 \xrightarrow{(0, \Sigma^{-1} f^\top_3)} X^\bot_3 \oplus A_3 \xrightarrow{x_3^\bot + 0} X^\bot_4 \to X^\bot_5 \to \dots \]
	We conclude that $[X[1]] = [F_1] + [Y_2] \in K_0(\grayson\cC)$.
	By \cref{lem:shifting}, we have $[X]=[X[1]]$ and we can shift $F_1$ down. The claim now follows by induction on the length of the support of $X$.
\end{proof}

\begin{prop}
	\label{prop:split}
	Sending $[X]$ to $\sum_{k\geq 2}[Y_k]$ yields a natural splitting of the natural map
	\[K_0(\grayson_7\cC) \to K_0(\grayson\cC).\]
\end{prop}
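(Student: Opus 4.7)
The plan is to verify that the formula $[X] \mapsto \sum_{k \geq 2}[Y_k]$ defines a well-defined homomorphism $K_0(\grayson\cC) \to K_0(\grayson_7\cC)$ whose composition with the forward map is the identity. Since the presentation of $K_0(\grayson\cC)$ supplied by \cref{prop:k_n-b-generators-and-relations} is given by equivalence classes of objects in $b^q\cC$ modulo cofiber sequences and diagonals, the core of the proof consists of four verifications: (i) independence of the auxiliary choices of $(A_k,B_k,S_k,f_k^\top,f_k^\bot)$; (ii) additivity under cofiber sequences $X'\to X\to X''$ in $b^q\cC$; (iii) vanishing on diagonals $X = \Delta(W)$ for $W \in f^q\cC$; and (iv) that $[X]\mapsto \sum_k[Y_k]$ is a section of $K_0(\grayson_7\cC) \to K_0(\grayson\cC)$. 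Item (iv) is immediate from \cref{prop:surj}.

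For (iii), if $X=\Delta(W)$, then $X^\top_k = X^\bot_k$ and one can take all choices of $A_k,B_k,S_k$ and the comparison map to be the same for $\top$ and $\bot$, so that $f_k^\top = f_k^\bot$. Then each $Y_k$ is itself a diagonal object in $b^q_5\cC \subseteq b^q_7\cC$, hence represents zero. For (ii), given a cofiber sequence $X'\to X\to X''$, I would first choose auxiliary data for $X'$ and $X''$ separately, then take $A_k := A'_k\oplus A''_k$, $B_k := B'_k\oplus B''_k$, $S_k := S'_k\oplus S''_k$, with the obvious direct-sum cofiber sequences; combined with the diagram $X^\top_k\oplus S_k\simeq (X'^\top_k\oplus S'_k)\oplus (X''^\top_k\oplus S''_k)$ (and analogously for $\bot$), one reads off $Y_k(X)\simeq Y_k(X')\oplus Y_k(X'')$ in $b^q_5\cC$, which gives the additive relation inside $K_0(\grayson_7\cC)$.

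The genuinely substantial step is (i). Two different choices of $(A_k,B_k,S_k)$ for the same object $X$ produce binary complexes $Y_k$ and $Y_k'$ supported on short windows, and I want to show $[Y_k] = [Y_k']$ in $K_0(\grayson_7\cC)$. The natural approach is to mimic the standard uniqueness argument for Heller's criterion: given any two cofiber sequences $A \to X_k^\top\oplus S\to B$ and $A'\to X_k^\top\oplus S'\to B'$, form the diagonal comparison by adding a common stabilization and using the octahedral axiom (i.e.\ the fact that a composition of cofiber sequences fits in a larger cofiber diagram) to build an explicit binary cofiber sequence in $b^q_7\cC$ connecting $Y_k$, $Y_k'$ and a diagonal complex. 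The reason $r=7$ appears is precisely this: each $Y_k$ is supported on a window of length roughly $4$, but the cofiber sequences witnessing independence of the choices need to be padded by two extra degrees on either side to accommodate the auxiliary terms $A, A', B, B', S, S'$ that mediate the comparison. The independence of the integer $k\geq r$ and of the maps $f_k^\top,f_k^\bot$ is analogous and reduces to the same octahedral-type padding argument.

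The main obstacle is therefore step (i): checking carefully that the cofiber sequences exhibiting the independence of choices genuinely lie in $b^q_7\cC$ (and not only in $b^q\cC$), and that their components can be assembled into honest objects of $b^q_7\cC$ rather than merely yielding equalities in $K_0(\grayson\cC)$. Once this is done, the formula descends to a homomorphism $K_0(\grayson\cC) \to K_0(\grayson_7\cC)$, and naturality in $\cC$ follows because the only ingredients in the construction (cofiber sequences, direct sums, suspensions) are preserved by exact functors, and compatible choices for auxiliary data can be transported along any exact functor $\cC\to\cC'$ after possibly enlarging the data by a stabilization, which does not affect the class.
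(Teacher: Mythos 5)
Your four-part architecture is the right one, and steps (iii) and (iv) match the paper. However, steps (i) and (ii) each contain a genuine gap, and the arguments as sketched would not go through.

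For (i), you propose to show that $[Y_k] = [Y_k']$ individually for each $k$ when the auxiliary data are changed. This is too strong and in fact false in general: each $Y_k$ is built from the choices at \emph{both} indices $k$ and $k+1$, so changing the data at index $k+1$ alters both $Y_k$ and $Y_{k+1}$, and there is no reason for the class of either one alone to stay fixed. Only the \emph{sum} $\sum_k [Y_k]$ is well defined. The paper's argument respects this: it exhibits a cofiber sequence in $b^q_7\cC$ whose outer terms are $Y_k$ and (a shift of) $Y_{k+1}$ plus a diagonal, and whose \emph{middle} column depends on the choices at $k$ and $k+2$ but not at $k+1$; thus the pair $[Y_k] + [Y_{k+1}]$ is independent of the choice at $k+1$, and a telescoping argument finishes. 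A direct comparison of $Y_k$ with $Y_k'$ via stabilization and the octahedral axiom does not see this cancellation between adjacent indices and cannot succeed.

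For (ii), the claim that $A_k := A_k' \oplus A_k''$, $S_k := S_k' \oplus S_k''$ together with the equivalence $X_k^\top \oplus S_k \simeq (X'^\top_k \oplus S_k') \oplus (X''^\top_k \oplus S_k'')$ gives a valid choice is incorrect: that equivalence would force the cofiber sequence $X'^\top_k \to X_k^\top \to X''^\top_k$ to split, which it does not. Heller's criterion only gives an equality in $K_0$, not an equivalence after a common stabilization. The paper instead works in $K_0(\cE\cC)$: using additivity (\cref{ex:classical-additivity}) and \cref{lem:binary-complex-k0} one sees that the cofiber sequences $X_k^\top \to X'^\top_k \to X''^\top_k$ and $X_k^\bot \to X'^\bot_k \to X''^\bot_k$ have the same class in $K_0(\cE\cC)$, and then applies Heller's criterion \emph{in the category $\cE\cC$} to produce cofiber sequences $A_k \to A_k' \to A_k''$, $B_k \to B_k' \to B_k''$, $S_k \to S_k' \to S_k''$ that are compatible with both filtrations; these are generally not direct sums. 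That compatibility is what yields cofiber sequences $Y_k \to Y_k' \to Y_k''$ in $b^q_5\cC$.
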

\begin{proof}
	We first show that $\sum_{k\geq 2}[Y_k]$ is independent of the choices of $A_k, B_k$ and $S_k$. Let $k\geq 3$ be given.
	
%
%
%
	Consider the following cofiber sequence of bounded filtrations supported on $[0,7]$ which is obtained by considering the right hand map and taking its fiber:
	\begin{equation}\begin{tikzcd}[ampersand replacement=\&, column sep=1.9em]\label{eq:split-well-defined}
	 \Sigma^{-2}B_k\ar[r]\ar[d,"0"] \& 0 \ar[d]\ar[r] \& \Sigma^{-1}B_{k}\ar[d, "{(0, \Sigma^{-1} f_{k}^\bot)}"] \\
	 \Sigma^{-1}X_k^\top\oplus \Sigma^{-1}X_k^\bot\oplus\Sigma^{-1}S\ar[r]\ar[d,"0"] \& \Sigma^{-1}B_k \ar[d, "{(0,\Sigma^{-1}f_k^\bot)}"]\ar[r,"{(0,\Sigma^{-1}f_k^\bot)}"] \& X_{k}^\top \oplus A_{k}\ar[d, "x_{k}^\top \oplus 0"] \\
	 \Sigma^{-1}X_{k+1}^\top/X_{k}^\top\oplus A_k\oplus \Sigma^{-1}B_{k+1}\ar[d,"{(0, 0+0+\Sigma^{-1} f_{k+1}^\bot)}"]\ar[r] \& X_k^\top\oplus A_k\ar[r,"x_k^\top+0"]\ar[d, "x_k^\top+0"] \& X_{k+1}^\top \oplus B_{k+1}\ar[d,"0+f_{k+1}^\bot"]  \\
	 X_{k+1}^\top \oplus A_{k+1}\ar[d, "x_{k+1}^\top \oplus 0"]\ar[r] \& X_{k+1}^\top \ar[r,"0"]\ar[d, "x_{k+1}^\top+0"] \& \Sigma A_{k+1} \ar[d] \\
	 X_{k+2}^\top \oplus B_{k+2}\ar[d, "0 + f_{k+2}^\bot"]\ar[r] \& X_{k+2}^\top\oplus B_{k+2}\ar[r]\ar[d, "0+f_{k+2}^\bot"] \& 0\ar[d]\\
	 \Sigma A_{k+2}\ar[r]\ar[d] \& \Sigma A_{k+2}\ar[r]\ar[d] \& 0\ar[d] \\
	 0\ar[r] \& 0\ar[r] \& 0
	\end{tikzcd}\end{equation}
	Note that the 2-cells on the right are all trivial, but this is not true for the 2-cells on the left.
	Interchanging $\top$ and $\bot$ in \eqref{eq:split-well-defined} defines a second cofiber sequence of bounded filtrations supported on $[0,7]$. Note that the two right hand maps define a morphism in $b^q_7\cC$ where the map on the underlying graded objects is trivial. Taking the fiber of this map yields an object $F_k$ in $b_7^q\cC$ whose top component is given by the left hand column in \eqref{eq:split-well-defined} and whose bottom component is given analogously. 
	
	Note that the right-hand column in the two versions of \eqref{eq:split-well-defined} is $Y_k$,
	while the fiber $F_k$ defines the sum of a shifted copy of $Y_{k+1}$ and a binary complex contained in the essential image of the diagonal functor.
	Using \cref{lem:shifting}, it follows that $[Y_k] + [Y_{k+1}] \in K_0(\grayson_7\cC)$ is equal to the class of the binary acyclic complex defined by the middle column.
	Now it is enough to observe that the middle column is independent of the choice of $A_{k+1}, B_{k+1}$ and $S_{k+1}$.
		
	The independence of $A_3$, $B_3$ and $S_3$ follows in the same way.
	
	This shows that the assignment $X \mapsto \sum_{k\geq 2} [Y_k]$ gives a well-defined map
	\begin{equation}\label{eq:split-as-sets} \operatorname{ob} b^q\cC \to K_0(\grayson_7\cC), \end{equation}
	which evidently sends equivalent objects to the same class.
	
	Let $X\to X'\to X''$ be a cofiber sequence in $b^q\cC$.
	Consider the cofiber sequences $X_k^\top\to (X')_k^\top \to (X'')_k^\top$ and $X_k^\bot \to (X')_k^\bot \to (X'')_k^\bot$, $k \in \bbN$, as elements in $K_0(\cE\cC)$. Since $[X_k^\top] = [X_k^\bot]$ and $[(X'')_k^\top] = [(X'')_k^\bot]$ in $K_0(\cC)$ by \cref{lem:binary-complex-k0}, we conclude from \cref{ex:classical-additivity} that
	\[ [X_k^\top\to (X')_k^\top \to (X'')_k^\top] = [X_k^\bot \to (X')_k^\bot \to (X'')_k^\bot] \in K_0(\cE\cC). \]
	From \cref{lem:hellers-criterion}, it follows that there are cofiber sequences $A_k\to A_k'\to A_k''$, $B_k\to B_k'\to B_k''$ and $S_k\to S_k'\to S_k''$ fitting into cofiber sequences of cofiber sequences as follows:
	\[\begin{tikzcd}
	A_k\ar [r]\ar [d]&X_k^\top\oplus S_k\ar [r]\ar [d]& B_k \ar [d]\\
	A_k'\ar [r]\ar [d]&(X')^\top_k\oplus S_k'\ar [r]\ar [d]& B_k' \ar [d]\\
	A_k''\ar [r]&(X'')_k^\top\oplus S_k''\ar [r]& B_k''
	\end{tikzcd}
	\quad\text{and}\quad
	\begin{tikzcd}
	A_k\ar [r]\ar [d]&X_k^\bot\oplus S_k\ar [r]\ar [d]& B_k \ar [d]\\
	A_k'\ar [r]\ar [d]&(X')_k^\bot\oplus S_k'\ar [r]\ar [d]& B_k' \ar [d]\\
	A_k''\ar [r]&(X_k'')_k^\bot\oplus S_k''\ar [r]& B_k''
	\end{tikzcd}
	\]
	Using these for the construction of the objects $Y_k, Y_k', Y_k''$ of $b^q_5$ as above, we obtain cofiber sequences $Y_k\to Y_k'\to Y_k''$.
	Hence \eqref{eq:split-as-sets} induces a well-defined homomorphism $K_0(b^q\cC)\to K_0(\grayson_7\cC)$.
	
	If $X^\top \simeq X^\bot$, we can make our choices such that $f_k^\top\simeq f_k^\bot$ for all $k\geq 3$ and hence $Y_k^\top \simeq Y_k^\bot$ for all $k\geq 2$. Therefore, the map induces a well-defined homomorphism $K_0(\grayson\cC) \to K_0(\grayson_7\cC)$.
	It is a split of the natural map $K_0(\grayson_7\cC) \to K_0(\grayson\cC)$ by \cref{prop:surj}.
\end{proof}

The preceding \cref{prop:surj,prop:split} prove \cref{thm:split2}. We obtain the following generalization to higher algebraic $K$-theory.

\begin{thm}
	\label{thm:split}
	The canonical map $K_0(\grayson^n_5\cC)\to K_0(\grayson^n\cC)$ is a surjection and the canonical map $K_0(\grayson_7^n\cC) \to K_0(\grayson^n\cC) \cong K_n(\cC)$ admits a natural section.
\end{thm}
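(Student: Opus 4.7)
The proof proceeds by induction on $n$, with the base case $n=1$ being \cref{thm:split2}. The strategy for the inductive step is to apply \cref{prop:surj} and \cref{prop:split} layer by layer, treating the remaining binary-complex structure at each stage as a parameter category.

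Concretely, let $X \in (B^q)^n\cC = B^q\cC'$ with $\cC' := (B^q)^{n-1}\cC$. The category $\cC'$ is idempotent complete by the same pullback argument used in the proof of \cref{prop:k_n-b-generators-and-relations}. Applying \cref{prop:surj} (respectively \cref{prop:split}) with base category $\cC'$ yields $[X] = \sum_k [Y_k]$ in $K_0(\grayson\cC')$ with $Y_k \in B^q_r\cC'$, for $r = 5$ (respectively $r = 7$). Taking the outermost cofiber first in the iterated-cofiber definition of $\grayson^n\cC$ identifies $\grayson((B^q)^{n-1}\cC)$ as the top corner of an $(n-1)$-cube of motives whose iterated cofiber is $\grayson^n\cC$; the resulting canonical map $K_0(\grayson\cC') \to K_0(\grayson^n\cC)$ lifts the identity $[X] = \sum_k [Y_k]$ to $K_0(\grayson^n\cC)$. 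To further promote each $Y_k$ from $B^q_r\cC'$ to $(B^q_r)^n\cC$, one applies the inductive hypothesis to shorten its entries. The construction of $Y_k$ depends on auxiliary choices $A_k, B_k, S_k \in \cC'$ realizing Heller's criterion; by the inductive hypothesis these may be chosen inside $(B^q_r)^{n-1}\cC$, provided $X$ is first replaced (modulo the diagonal relations) by an object whose own entries lie in $(B^q_r)^{n-1}\cC$.

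The main obstacle is the combinatorial bookkeeping required by this iterated procedure. One must verify that the auxiliary choices at each layer are compatible with those made at earlier layers, that the resulting class in $K_0(\grayson^n_7\cC)$ is independent of all auxiliary choices modulo the $n$ diagonal relations of \cref{prop:k_n-b-generators-and-relations}, and that the overall assignment is natural in $\cC$. These verifications extend the techniques of \cref{prop:split} across all $n$ layers without introducing essentially new ideas, although the level of care required grows with $n$.
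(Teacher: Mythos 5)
Your proof has the right starting point (induction, reduce to the case $n=1$ via \cref{thm:split2}), and the first move of applying \cref{prop:split} with $\cC' := (B^q)^{n-1}\cC$ to get $[X] = \sum_k[Y_k]$ with $Y_k \in B^q_r\cC'$ is also the natural thing to do. But the step that you describe as ``promoting each $Y_k$ from $B^q_r\cC'$ to $(B^q_r)^n\cC$'' by ``applying the inductive hypothesis to shorten its entries'' is a genuine gap, not combinatorial bookkeeping. The inductive hypothesis furnishes a section of the map $K_0(\grayson^{n-1}_r\cD) \to K_0(\grayson^{n-1}\cD)$ for a stable $\infty$--category $\cD$; that is an equation between classes in a quotient of a Grothendieck group, and it does not produce, for a given object of $(B^q)^{n-1}\cC$, a replacement object in $(B^q_r)^{n-1}\cC$. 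Likewise, the parenthetical ``provided $X$ is first replaced (modulo the diagonal relations) by an object whose own entries lie in $(B^q_r)^{n-1}\cC$'' has no well-defined meaning at the level of objects. Even if one forced the auxiliary objects $A_k, B_k, S_k$ to lie in $(B^q_r)^{n-1}\cC$, Heller's criterion would then have to be verified there, which requires knowing the relevant class vanishes in $K_0((B^q_r)^{n-1}\cC)$ rather than in $K_0((B^q)^{n-1}\cC)$; this is not automatic.

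The paper circumvents this difficulty with a different mechanism: it proves that the constructions $\grayson$ and $\grayson_r$ \emph{commute}, i.e.\ that any word over $\{\grayson,\grayson_r\}$ is naturally equivalent to any permutation of itself (via the observation that the relevant cubes of categories arise from cubes of posets that can be permuted). With that in hand, the induction is a clean retract argument on the level of $K_0$ groups: the map $K_0(\grayson^n_7\cC) \to K_0(\grayson\grayson_7^{n-1}\cC)$ is split because it is a natural retract of $K_0(\grayson_7 B_7^{n-1}\cC) \to K_0(\grayson B_7^{n-1}\cC)$, which is split by \cref{prop:split}; one then uses commutativity to rewrite $\grayson\grayson_7^{n-1}\cC$ as $\grayson_7^{n-1}\grayson\cC$, and the remaining map is a retract of $K_0(\grayson_7^{n-1}B^q\cC) \to K_0(\grayson^{n-1}B^q\cC)$, which is split by the inductive hypothesis. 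This retract formalism never asks you to replace objects, only to compose group homomorphisms, and that is the structural idea your argument is missing. You would need to either supply the commutativity lemma yourself, or find a genuine object-level shortening procedure compatible with the auxiliary choices in \cref{const:mu-x} and \cref{prop:split}; the latter is considerably harder than the wording of your proposal suggests.
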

\begin{proof}
	We argue by induction.
	\cref{prop:surj} and \cref{prop:split} prove the case $n=1$, which is the start of the induction.
	The induction step is analogous to \cite[Remark~8.1]{Grayson2012}, \cite[Proof of Theorem~1.4]{KasprowskiWinges} and \cref{cor:iterated-grayson} above.
	
	We will only describe the induction for the existence of the natural section, the induction for the surjectivity statement is completely analogous.
	
	In order to extend \cref{prop:split} from $K_1$ to higher $K$--groups, we require the additional observation that $\Gamma$ and $\Gamma_r$ commute.
	This can be morally seen by permuting the two factors in $\bbN\times\bbN$, but the formal argument is rather lengthy.
	Hence we will first complete the proof using this claim before giving the formal argument.
	
	The map $K_0(\grayson^n_7\cC)\to K_0(\grayson\grayson_7^{n-1}\cC)$ admits a natural section by \cref{prop:split}
	because it is a natural retract of the homomorphism $K_0(\grayson_7 B_7^{n-1}\cC) \to K_0(\grayson B_7^{n-1}\cC)$.
	Since $\grayson$ and $\grayson_7$ commute, it suffices to show that $K_0(\grayson_7^{n-1}\grayson\cC)\to K_0(\grayson^n\cC)$ admits a natural section.
	Since this map is in turn a natural retract of the map $K_0(\grayson_7^{n-1}B^q\cC)\to K_0(\grayson^{n-1}B^q\cC)$,
	this follows from the induction assumption.
	
	What is left to do is to provide an argument why $\grayson$ and $\grayson_7$ may be permuted.
	Fix $r \in \bbN$. Let $\bfW$ be a word of length $n$ over the alphabet $\{B^q, B^q_r\}$, and let $\sigma \in S_n$ be a permutation.
	We claim that there is a canonical equivalence $\bfW\cC \simeq \bfW_\sigma\cC$, where $\bfW_\sigma$ denotes the word $\bfW$ permuted according to $\sigma$.
	
	Recall that the natural transformation $\grad \colon F^q \to \bigoplus_\bbN \cC$ is obtained via pullback with a map of posets $\gamma \colon \bbN^\disc \to \gapindex{\bbN}$.
	Letting $\bbN(0) := \gapindex{\bbN}$ and $\bbN(1) := \bbN^\disc$, define for $x = (x_1,\dots,x_n) \in \{0,1\}^n$
	\[ \bbN(x) := \prod_{i=1}^n \bbN(x_i). \]
	Consider the functor
	\[ \bfN \colon [1]^n \to \{\text{posets}\}, x \mapsto \bbN(x) \]
	induced by $\gamma$.
	Then $\sigma$ induces a natural isomorphism $\bfN \xrightarrow{\cong} \sigma^*\bfN$ to the diagram of posets obtained by permuting the coordinates according to $\sigma$.
	
	Let $\Fun(\bfN,\cC) \colon (\Delta^1)^n \to \catex$ denote the induced $n$--cube of stable $\infty$--categories.
	Then we obtain an induced equivalence of functors
	\[ \Fun(\sigma^*\bfN,\cC) \xrightarrow{\sim} \Fun(\bfN,\cC), \]
	which contains as a full subfunctor those cubes in which we restrict to $F^q$ and $F^q_r$ at the appropriate places (according to the original choice of word $\bfW$).
	
	Let $\widetilde{\Fun}(\bfN,\cC) \colon (\sd\Delta^1)^n \to \catex$ denote the functor obtained from $\Fun(\bfN,\cC)$
	by further precomposing with the map induced by the map of simplicial sets $\sd\Delta^1 \to \Delta^1$
	which sends the endpoints of the subdivided $1$--simplex to $0$ and the subdivision point to $1$.
	
	Then $\bfW\cC$ is the limit of $\widetilde{\Fun}(\bfN,\cC)$, while $\bfW_\sigma\cC$ can be obtained as the limit of $\widetilde{\Fun}(\sigma^*\bfN,\cC)$.
	Since we have seen that the diagrams $\widetilde{\Fun}(\bfN,\cC)$ and $\widetilde{\Fun}(\sigma^*\bfN,\cC)$ are equivalent, we obtain the desired equivalence
	\[ \bfW\cC \simeq \bfW_\sigma\cC. \]
	Let $\bfW'$ denote the word over the alphabet $\{F^q, F^q_r\}$ obtained from $\bfW$ by replacing $B$ with $F$.
	Then there is an evident diagonal transformation $\bfW'\cC \xrightarrow{\Delta} \bfW\cC$ which fits into a commutative square
	\[\begin{tikzcd}
	   \bfW'\cC\ar[r, "\Delta"]\ar[d, "\simeq"] & \bfW\cC\ar[d, "\simeq"] \\
	   \bfW_\sigma'\cC\ar[r, "\Delta"] & \bfW_\sigma\cC
	  \end{tikzcd}\]
	Applying $\uloc$ and taking horizontal cofibers proves that
	\[ \bfV\cC \simeq \bfV_\sigma\cC \]
	for any word $\bfV$ over the alphabet $\{\grayson, \grayson_r\}$.
	In particular, $\grayson$ and $\grayson_7$ commute.
\end{proof}

\section{Infinite products}\label{sec:products}
This section is devoted to the proof of \cref{thm:commute}. The proof of \cref{thm:commute} for connective $K$-theory is almost verbatim the same as the proof of \cite[Theorem~4.1]{KasprowskiWinges}.

\begin{lem}\label{lem:k_0-products}
 The functor $K_0 \colon \catex \to \Ab$ commutes with infinite products.
\end{lem}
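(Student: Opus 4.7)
The plan is to exhibit the natural comparison map induced by the projection functors and verify that it is a bijection, leveraging Heller's criterion from \cref{lem:hellers-criterion} to handle the nontrivial direction. Write $\cC := \prod_{i \in I} \cC_i$. Since the product in $\catex$ is computed on underlying $\infty$--categories and a sequence in $\cC$ is a cofiber sequence precisely if each component is, the projections $p_j \colon \cC \to \cC_j$ are exact, and hence assemble into a homomorphism
\[ \Phi \colon K_0(\cC) \to \prod_{i \in I} K_0(\cC_i),\qquad [X] \mapsto ([p_i(X)])_{i \in I}. \]

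For surjectivity, given any family $([X_i])_{i \in I}$ with $X_i \in \cC_i$, form the object $X := (X_i)_{i \in I} \in \cC$; evidently $\Phi([X]) = ([X_i])_{i \in I}$.

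The main step is injectivity. Suppose $X \in \cC$ with $\Phi([X]) = 0$, i.e.\ $[p_i(X)] = 0 \in K_0(\cC_i)$ for every $i \in I$. By \cref{lem:hellers-criterion} applied in each $\cC_i$ (comparing $X_i := p_i(X)$ to a zero object), for each $i$ we may choose objects $A_i, B_i, S_i \in \cC_i$ together with cofiber sequences
\[ A_i \to X_i \oplus S_i \to B_i \quad \text{and} \quad A_i \to S_i \to B_i. \]
Setting $A := (A_i)_{i \in I}$, $B := (B_i)_{i \in I}$, $S := (S_i)_{i \in I} \in \cC$, these componentwise data assemble to cofiber sequences $A \to X \oplus S \to B$ and $A \to S \to B$ in $\cC$, since cofiber sequences in $\cC$ are detected componentwise. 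Applying the reverse direction of \cref{lem:hellers-criterion} in $\cC$ yields $[X] = 0 \in K_0(\cC)$, so $\Phi$ is injective.

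The only potential subtlety is the assembly step: one must know that the choices of cofiber sequences in each factor can be packaged into an actual cofiber sequence in $\cC$ rather than just a componentwise collection of such. This is automatic because $\prod_{i \in I} \cC_i$ is formed in $\catex$ via the forgetful functor to $\infty$--categories, so a square in $\cC$ is a pushout iff each component is; thus no further argument is needed, and the proof is complete.
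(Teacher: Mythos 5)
Your proof is correct and follows the same two-step strategy as the paper: surjectivity of the natural comparison map $\Phi$ is immediate, and injectivity is deduced from Heller's criterion (\cref{lem:hellers-criterion}) applied componentwise and then reassembled using the fact that cofiber sequences in $\prod_{i \in I}\cC_i$ are detected componentwise. One small point worth making explicit: reducing injectivity of the group homomorphism $\Phi$ to the implication ``$\Phi([X]) = 0 \Rightarrow [X] = 0$'' requires that every element of $K_0(\cC)$ is represented by a single object, which holds in a stable $\infty$--category because $-[Y] = [\Sigma Y]$ and hence $[X] - [Y] = [X \oplus \Sigma Y]$.
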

\begin{proof}
 Let $\{\cC_i\}_{i \in I}$ be a family of stable $\infty$--categories.
 The natural comparison map $K_0(\prod_{i \in I} \cC_i) \to \prod_{i \in I} K_0(\cC_i)$ is obviously surjective.
 Injectivity follows from \cref{lem:hellers-criterion}.
\end{proof}

The next lemma, whose proof is similar to the argument in the proof of \cite[Theorem~1.2]{KasprowskiWinges},
shows that Verdier quotients are compatible with the formation of products.

\begin{lem}\label{lem:products-exact-sequences}
 Let $\{\cD_i \to \cC_i \to \cC_i/\cD_i \}_{i \in I}$ be a family of Verdier sequences in $\catex$.
 
 Then there is a natural equivalence
 \[ \prod_{i \in I} \cC_i / \prod_{i \in I} \cD_i \simeq \prod_{i \in I} \cC_i/\cD_i. \]
\end{lem}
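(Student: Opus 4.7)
The plan is to build a natural comparison functor and then check via mapping spaces that it is an equivalence.

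For each $j \in I$, the projection $\prod_i \cC_i \to \cC_j$ followed by the localization $\cC_j \to \cC_j/\cD_j$ inverts every morphism whose cofiber lies in $\prod_i \cD_i$, since its $j$-th component has cofiber in $\cD_j$. By the universal property of the Verdier quotient (\cref{def:quotient}), this composite factors through $\prod_i \cC_i/\prod_i \cD_i$, and assembling over $j$ yields a comparison functor $F \colon \prod_i \cC_i/\prod_i \cD_i \to \prod_i \cC_i/\cD_i$. Essential surjectivity of $F$ is immediate: localization does not alter the objects, so any tuple $(Y_i)$ of objects in $\prod_i \cC_i/\cD_i$ lifts to a tuple in $\prod_i \cC_i$, whose image under $F$ is again $(Y_i)$.

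For fully faithfulness I would invoke the mapping space formula of \cite[Theorem~I.3.3]{NikolausScholze}, which was already used in the proof of \cref{prop:top-contractible-euler-zero}: for any fully faithful inclusion $\cA \subseteq \cB$ of stable $\infty$--categories,
\[ \Map_{\cB/\cA}(X, Y) \simeq \colim_{Z \to Y \in \cA_{/Y}} \Map_\cB(X, \cofib(Z \to Y)). \]
Since products preserve fully faithfulness, this applies to the inclusion $\prod_i \cD_i \subseteq \prod_i \cC_i$. Applying the formula on both sides of $F$, together with the identification $(\prod_i \cD_i)_{/(Y_i)} \simeq \prod_i (\cD_i)_{/Y_i}$ and the componentwise computation of cofibers and mapping spaces in a product, the claim reduces to showing that the natural map
\[ \colim_{\prod_i J_i} \prod_i G_i \longrightarrow \prod_i \colim_{J_i} G_i \]
is an equivalence for any family of filtered $\infty$--categories $J_i$ and any space-valued functors $G_i$.

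The main obstacle is precisely this last commutativity, since filtered colimits do not a priori commute with infinite products of spaces. The intended argument is to reduce to a set-level statement by checking on all homotopy groups: both $\pi_0$ and $\pi_n$ (for $n \geq 1$ with the evident choices of basepoint) commute with arbitrary products and with filtered colimits of spaces. The resulting set-level claim is elementary: an element on either side is represented by a tuple $(x_i)$ with $x_i \in G_i(j_i)$ for some $(j_i) \in \prod_i J_i$, and two such tuples are identified on the right if and only if there exists $(k_i) \in \prod_i J_i$ dominating the given indices and witnessing the identifications componentwise, which is exactly the equivalence relation defining the left-hand side.
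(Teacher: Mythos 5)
Your proof follows the same outline as the paper's: construct the comparison functor via the universal property of the Verdier quotient, observe essential surjectivity because localization is essentially surjective, and establish full faithfulness by computing mapping spaces in the quotient via the Nikolaus--Scholze formula, reducing the whole thing to the commutation of filtered colimits with arbitrary products of spaces. The one place you diverge is that last step: the paper simply cites \cite[Lemma~3.10]{BSS} for the fact that filtered colimits distribute over products in spaces (taking the colimit over the \emph{product} of the filtered index categories, which you correctly identify as the crucial form of the statement), whereas you re-derive it by reducing to sets along homotopy groups. Your derivation is sound in spirit: $\pi_0$ and the $\pi_n$ commute with both arbitrary products and filtered colimits of spaces, $\prod_i J_i$ is again filtered, and the set-level distributivity is elementary as you describe. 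The only thing you gloss over is the bookkeeping of basepoints when applying this at $\pi_n$ for $n\geq 1$: one has to pass to the slice $\prod_i (J_i)_{j_i/}$ for a chosen representative of the basepoint and note that undercategories of filtered categories are filtered and cofinal, so the argument does go through; still, the phrase ``with the evident choices of basepoint'' is doing more work than it lets on, and the clean route is to invoke the reference as the paper does. Net effect: same strategy, with a self-contained (if slightly informal) proof of the auxiliary distributivity fact in place of the citation.
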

\begin{proof}
 Consider the commutative diagram of stable $\infty$--categories and exact functors
 \[\begin{tikzcd}[column sep=small]
    & \prod_{i \in I} \cC_i\ar[rd, "\ell"]\ar[ld, "\ell'"'] & \\
    \prod_{i \in I} \cC_i / \prod_{i \in I} \cD_i\ar[rr, "f"] & & \prod_{i \in I} \cC_i/\cD_i
   \end{tikzcd}\]
 Since the localization functor $\ell$ is essentially surjective, so is $f$.
 Therefore, it suffices to show that $f$ is fully faithful.
 Using \cite[Theorem~I.3.3]{NikolausScholze} to compute mapping spaces in the localization and referring to \cite[Lemma~3.10]{BSS} for the fact that filtered colimits distribute over products in spaces, we conclude that
 \begin{align*}
  \Map&_{\prod_{i \in I} \cC_i / \prod_{i \in I} \cD_i}((X_i)_i, (Y_i)_i) \\
  &\simeq \colim_{((Z_i)_i \to (Y_i)_i \in (\prod_{i \in I} \cD_i)_{/(Y_i)_i}} \Map_{\prod_{i \in I} \cC_i}((X_i)_i, \cofib((Z_i)_i \to (Y_i)_i)) \\
  &\simeq \colim_{((Z_i)_i \to (Y_i)_i \in (\prod_{i \in I} \cD_i)_{/(Y_i)_i}} \prod_{i \in I} \Map_{\cC_i}(X_i, \cofib(Z_i \to Y_i)) \\
  &\simeq \prod_{i \in I} \colim_{Z_i \to Y_i \in (\cD_i)_{/Y_i}} \Map_{\cC_i}(X_i, \cofib(Z_i \to Y_i)) \\
  &\simeq \prod_{i \in I} \Map_{\cC_i/\cD_i}(X_i, Y_i) \\
  &\simeq \Map_{\prod_{i \in I} \cC_i/\cD_i}(X_i, Y_i),
 \end{align*}
 so $f$ is also fully faithful.
\end{proof}

\begin{lem}
	\label{lem:idem}
	Let $\{\cC_i\}_{i \in I}$ be a family of stable $\infty$--categories. 
	The canonical functor
	\[\Idem(\prod_{i\in I}\cC_i)\to \prod_{i\in I}\Idem(\cC_i)\]
	is an equivalence.	
\end{lem}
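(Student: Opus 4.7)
The plan is to verify the universal characterization of the idempotent completion: a fully faithful exact functor $\cC \hookrightarrow \cD$ with $\cD$ idempotent complete exhibits $\cD$ as $\Idem(\cC)$ precisely when every object of $\cD$ is a retract of an object in the essential image. This follows from the universal property of $\Idem$ together with the description of $\Idem(\cC)$ as the retract closure of $\cC$ inside $\mathrm{Ind}(\cC)$; compare \cite[Section~5.1.4]{MR2522659}. I will apply this characterization to the canonical inclusion $\prod_{i \in I} \cC_i \hookrightarrow \prod_{i \in I} \Idem(\cC_i)$, which identifies the target as $\Idem(\prod_{i \in I} \cC_i)$ and in turn produces the required equivalence.

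First, the inclusion is fully faithful: mapping spaces in a product of $\infty$--categories factor as products of mapping spaces, and each $\cC_i \hookrightarrow \Idem(\cC_i)$ is fully faithful. Next, the product $\prod_{i \in I} \Idem(\cC_i)$ is itself idempotent complete. A coherent idempotent on the product is a functor from the walking-idempotent quasicategory, and under the universal property of the product this is the same datum as a family of coherent idempotents in each $\Idem(\cC_i)$; each of those splits, and the resulting splittings (being certain limits) assemble componentwise to a splitting in the product.

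Finally, every object $(X_i) \in \prod_{i \in I} \Idem(\cC_i)$ is a retract of an object of $\prod_{i \in I} \cC_i$: for each $i$, choose $Y_i \in \cC_i$ together with a coherent retract diagram witnessing $X_i$ as a retract of $Y_i$ in $\Idem(\cC_i)$, which exists by construction. Since a functor into a product is the same as a compatible family of component functors, these diagrams assemble into a single retract diagram in $\prod_{i \in I} \Idem(\cC_i)$ exhibiting $(X_i)$ as a retract of $(Y_i)$. Invoking the universal characterization then yields the desired equivalence.

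The main technical point to watch is coherence: in the $\infty$--categorical setting, an idempotent or a retraction is an entire diagram (parametrized by the appropriate simplicial set) rather than an arrow satisfying an equation up to homotopy. This dissolves into routine bookkeeping, however, because limits in $\catex$ are computed at the level of underlying quasicategories, so any map from a fixed simplicial set into $\prod_{i \in I} \Idem(\cC_i)$ decomposes canonically into a compatible family of componentwise maps.
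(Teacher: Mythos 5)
Your proposal is correct and follows essentially the same route as the paper: both arguments verify Lurie's characterization of the idempotent completion by checking that $\prod_{i \in I} \Idem(\cC_i)$ is idempotent complete (because the relevant (co)limits can be computed componentwise) and that every object is a retract of an object in $\prod_{i \in I} \cC_i$ (because retract diagrams into a product decompose into compatible families). The only cosmetic difference is that you spell out the coherence bookkeeping and the fully faithfulness check explicitly, whereas the paper leaves those implicit.
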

\begin{proof}
 The canonical functor $\prod_{i \in I} \cC_i \to \prod_{i \in I} \Idem(\cC_i)$ exhibits $\prod_{i \in I} \Idem(\cC_i)$ as an idempotent completion of $\prod_{i \in I} \cC_i$ in the sense of \cite[Definition~5.1.4.1]{MR2522659}: Since idempotent completeness amounts to the existence of certain colimits (\cite[Section~4.4.5]{MR2522659}) and colimits in a product category can be computed componentwise, $\prod_{i \in I} \Idem(\cC_i)$ is idempotent complete; moreover, every object in $\prod_{i \in I} \Idem(\cC_i)$ is a retract of an object in $\prod_{i \in I} \cC_i$ because this is true for each individual component. 	
\end{proof}

\begin{prop}\label{prop:commute-pi_n}
 Let $\{\cC_i\}_{i \in I}$ be a family of stable $\infty$--categories.
 The comparison map
 \[ \pi_n\bfK(\prod_{i \in I} \cC_i) \to \prod_{i \in I} \pi_n\bfK(\cC_i) \]
 is an isomorphism for all $n \in \bbZ$.
\end{prop}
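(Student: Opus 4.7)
The plan is to split according to the sign of $n$. For $n\geq 0$ the main tool is the natural section from \cref{thm:split}, while for $n<0$ I reduce downward by exploiting the localizing property of $\bfK$.

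The case $n=0$ is immediate from \cref{lem:k_0-products} and \cref{lem:idem} together with the identification $\pi_0\bfK(\cC)\cong K_0(\Idem\cC)$. For $n\geq 1$, I would first observe that the bounded constructions $F^q_r\cC$ and $B^q_r\cC$ are built from $\cC$ by a finite limit (a functor category on a finite poset and a pullback); since $\Fun(K,-)$ and pullbacks in $\catex$ commute with arbitrary products, we obtain equivalences
\[ (B^q_r)^n\big(\textstyle\prod_i\cC_i\big)\simeq \textstyle\prod_i(B^q_r)^n\cC_i \]
and the analogous statement for $F^q_r$. A verbatim adaptation of the proof of \cref{prop:k_n-b-generators-and-relations} applies to $\grayson^n_r$ in place of $\grayson^n$; its hypotheses (idempotent completeness of $F^q_r\cC$ and $B^q_r\cC$) hold because these are finite limit constructions out of an idempotent complete $\cC$. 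Combining this with \cref{lem:k_0-products} and the observation that cofiber sequences and essential images of the diagonal functors can be detected componentwise yields
\[ K_0(\grayson^n_r\textstyle\prod_i\cC_i)\cong \textstyle\prod_i K_0(\grayson^n_r\cC_i)\qquad\text{for } r\in\{5,7\}. \]
Write $\sigma\colon K_0(\grayson^n_7\cC)\to \pi_n\bfK(\cC)$ for the canonical quotient and $s$ for the natural section supplied by \cref{thm:split}. Naturality of $s$ implies that for $y\in \pi_n\bfK(\prod_i\cC_i)$ the element $s(y)$ corresponds under the product isomorphism to $(s(y_i))_i$, where $y_i$ is the image of $y$ in $\pi_n\bfK(\cC_i)$. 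Hence the comparison map is injective (if all $y_i$ vanish then $s(y)=0$ and so $y=\sigma(s(y))=0$), and surjectivity follows by starting from $(x_i)\in\prod_i\pi_n\bfK(\cC_i)$, forming $(s(x_i))_i\in K_0(\grayson^n_7\prod_i\cC_i)$ via the product isomorphism, and applying $\sigma$.

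For $n<0$ I proceed by downward induction, the base case $n=0$ being in hand. The projections $\prod_i\cC_i\to\cC_j$ assemble into a natural functor $\Sigma(\prod_i\cC_i)\to\prod_i\Sigma\cC_i$ fitting into a morphism of Verdier sequences whose top row is $\prod_i\cC_i\to\Ind_{\aleph_0}(\prod_i\cC_i)^\kappa\to\Sigma(\prod_i\cC_i)$ and whose bottom row is $\prod_i\cC_i\to\prod_i\Ind_{\aleph_0}(\cC_i)^\kappa\to\prod_i\Sigma\cC_i$; exactness of the latter is \cref{lem:products-exact-sequences}. Both middle terms admit Eilenberg swindles---a product of $\aleph_0$-cocomplete stable $\infty$--categories remains $\aleph_0$-cocomplete---so $\bfK$ vanishes on them and the rightmost vertical map becomes a $\bfK$-equivalence. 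Combined with the identification $\pi_n\bfK(\cC)\cong\pi_{n+1}\bfK(\Sigma\cC)$ (coming from $\bfK$ being localizing) and the inductive hypothesis applied to the family $\{\Sigma\cC_i\}$, this yields the desired isomorphism on $\pi_n$.

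The principal obstacle is the $n\geq 1$ step: making precise that $K_0(\grayson^n_r-)$ genuinely preserves infinite products. Once one grants the analog of \cref{prop:k_n-b-generators-and-relations} for the bounded variant, the section $s$ provides a purely formal comparison between $\pi_n\bfK(\prod_i\cC_i)$ and a group which a priori commutes with products, closing the argument.
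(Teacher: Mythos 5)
Your proof is correct and follows essentially the same route as the paper: a retract argument via the natural section of \cref{thm:split} combined with the observation that $F^q_7$ and $B^q_7$ are finite-limit constructions and hence commute with products for $n\geq 1$, and suspension plus Eilenberg swindles together with \cref{lem:products-exact-sequences} for $n<0$. One small point: for the $n\geq 1$ step you should first invoke \cref{lem:idem} (and the fact that $\bfK$ is insensitive to idempotent completion) to reduce to idempotent complete $\cC_i$, since \cref{thm:split} and \cref{prop:k_n-b-generators-and-relations} are established only under that standing hypothesis; the paper does this reduction at the outset.
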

\begin{proof}
	By \cref{lem:idem} and the fact that $\bfK$ is a localizing invariant, we may assume without loss of generality that all $\cC_i$ are idempotent complete.
 
 The case $n = 0$ is provided by \cref{lem:k_0-products}.
 For $n \geq 1$, we consider the commutative diagram
 \[\begin{tikzcd}
    K_0(\grayson^n\prod_{i \in I} \cC_i)\ar[r]\ar[d] & \prod_{i \in I} K_0(\grayson^n\cC_i)\ar[d] \\
    K_0(\grayson^n_7\prod_{i \in I} \cC_i)\ar[r, "\phi"]\ar[d] & \prod_{i \in I} K_0(\grayson^n_7\cC_i)\ar[d] \\
    K_0(\grayson^n\prod_{i \in I} \cC_i)\ar[r] & \prod_{i \in I} K_0(\grayson^n\cC_i)
   \end{tikzcd}\]
 in which the vertical homomorphisms are given by the section of \cref{thm:split} followed by the homomorphism induced by the canonical maps $\grayson^n_7\cC_i \to \grayson^n\cC_i$.
 In particular, \cref{thm:split} together with \cref{cor:iterated-grayson-K} tells us that this diagram exhibits the comparison map $\pi_n\bfK(\prod_{i \in I} \cC_i) \to \prod_{i \in I} \pi_n\bfK(\cC_i)$ as a retract of the middle horizontal homomorphism.
 
 Since $F^q_7(\prod_{i \in I} \cC_i) \simeq \prod_{i \in I} F^q_7\cC_i$ and $\bigoplus_{i = 0}^7 (\prod_{i \in I} \cC_i) \simeq \prod_{i \in I} \bigoplus_{i = 0}^7 \cC_i$,
 we see that $B^q_7(\prod_{i \in I} \cC_i) \simeq \prod_{i \in I} B^q_7\cC_i$ because limits commute with each other.
 We conclude that
 \[ \grayson_7(\prod_{i \in I} \cC_i) \simeq \prod_{i \in I} \grayson_7\cC_i. \]
 Now it is immediate from \cref{lem:k_0-products} that $\phi$ is an isomorphism, which implies the claim for $n \geq 1$.
 
 We are left with the case $n < 0$.
 Recall from \cref{sec:lower-k-theory} that $\pi_{-n}\bfK(\cC)$ is naturally isomorphic to $K_0(\Idem(\Sigma^n\cC))$ for $n \geq 1$.
 
 Note that $\prod_{i\in I}\Ind_{\aleph_0}(\cC_i)^\kappa$ and $\Ind_{\aleph_0}(\prod_{i \in I}\cC_i)^\kappa$ have trivial $K$--theory since they admit infinite coproducts. Hence using \cref{lem:products-exact-sequences}, it is easy to see that
 \[ \bfK(\prod_{i \in I} \Sigma^n\cC_i) \simeq \bfK(\Sigma^n \prod_{i \in I} \cC_i), \]
 and the claim follows by another application of \cref{lem:k_0-products}.
\end{proof}

\begin{proof}[Proof of \cref{thm:commute}]
 The theorem is an immediate consequence of \cref{prop:commute-pi_n} and \cite[Remark~1.4.3.8]{LurieHA}.
 The claim about connective $K$--theory follows by applying $\tau_{\geq 0}$; since $\tau_{\geq 0}$ is a right adjoint, it preserves products.
\end{proof}

\begin{proof}[Proof of \cref{thm:uadd-products}]
 By \cite[Theorem~7.13]{BGT13}, we have for every stable $\infty$--category $\cA$ and compact idempotent complete stable $\infty$--category $\cB$ a natural equivalence of spectra
 \[ \Map(\uadd(\cB), \uadd(\cA)) \simeq \Kcon(\Fun^{\mathrm{ex}}(\cB, \Idem(\cA))). \]
 Let now $\{\cC_i\}_{i \in I}$ be any family of stable $\infty$--categories, and let $\cB$ be a compact, idempotent complete stable $\infty$--category.
 Then we have
 \begin{eqnarray*}
  \Map(\uadd(\cB), \uadd(\prod_{i \in I} \cC_i))
  &\simeq& \Kcon(\Fun^{\mathrm{ex}}(\cB, \Idem(\prod_{i \in I} \cC_i))) \\
  &\stackrel{(*)}{\simeq}& \Kcon(\Fun^{\mathrm{ex}}(\cB, \prod_{i \in I} \Idem(\cC_i))) \\
  &\simeq& \Kcon(\prod_{i \in I} \Fun^{\mathrm{ex}}(\cB, \Idem(\cC_i))) \\
  &\stackrel{(**)}{\simeq}& \prod_{i \in I} \Kcon(\Fun^{\mathrm{ex}}(\cB, \Idem(\cC_i))) \\
  &\simeq& \prod_{i \in I} \Map(\uadd(\cB), \uadd(\cC_i))
 \end{eqnarray*}
 where $(*)$ follows from \cref{lem:idem} and $(**)$ follows from \cref{thm:commute}. Since $\madd$ is a localization of $\operatorname{Pre}_{\Sp}((\catperf)^\omega)$ by \cite[Remark~6.8]{BGT13}, it is generated by the images of compact idempotent complete stable $\infty$--categories under $\uadd$. 
 This verifies the universal property of the product. Hence
 \[ \uadd(\prod_{i \in I} \cC_i) \simeq \prod_{i \in I} \uadd(\cC_i).\qedhere \]
\end{proof}

\begin{rem}
	The proof of \cref{thm:uadd-products} breaks down for $\uloc$ since the identification of mapping spectra in $\mloc$ holds only under stricter assumptions on $\cB$, cf.~\cite[Theorem~9.36]{BGT13}.
\end{rem}

\bibliographystyle{alpha}
\bibliography{grayson}

\end{document}